\theoremstyle{plain}
\newtheorem{theorem}{Theorem}[section]
\newtheorem{proposition}[theorem]{Proposition}
\newtheorem{lemma}[theorem]{Lemma}
\theoremstyle{definition}
\newtheorem{definition}[theorem]{Definition}
\newtheorem{example}[theorem]{Example}
\newtheorem{remark}[theorem]{Remark}
\newtheorem{question}[theorem]{Question}
\newtheorem{conjecture}[theorem]{Conjecture}
\newtheorem{notation}[theorem]{Notation}
\theoremstyle{remark}
\numberwithin{equation}{section}
\newcommand{\N}{\mathbb N}
\newcommand{\R}{\mathbb R}
\newcommand{\C}{\mathbb C}
\newcommand{\fa}{\mathfrak a}
\newcommand{\fc}{\mathfrak c}
\newcommand{\fg}{\mathfrak g}
\newcommand{\fh}{\mathfrak h}
\newcommand{\fk}{\mathfrak k}
\newcommand{\fp}{\mathfrak p}
\newcommand{\fq}{\mathfrak q}
\DeclareMathOperator{\GL}{GL}
\DeclareMathOperator{\Ot}{O}
\DeclareMathOperator{\SO}{SO}
\DeclareMathOperator{\SU}{SU}
\DeclareMathOperator{\Sp}{Sp}
\DeclareMathOperator{\Ut}{U}
\DeclareMathOperator{\Spin}{Spin}
\newcommand{\gl}{\mathfrak{gl}}
\newcommand{\op}{\operatorname}
\newcommand{\Id}{\operatorname{Id}}
\DeclareMathOperator{\diag}{diag}
\DeclareMathOperator{\Span}{Span}
\newcommand{\inner}[2]{\langle {#1},{#2}\rangle }
\newcommand{\innerdots}{\langle \cdot,\cdot \rangle }
\DeclareMathOperator{\End}{End}
\DeclareMathOperator{\Sym}{Sym}
\DeclareMathOperator{\Ad}{Ad}
\DeclareMathOperator{\Cas}{Cas}
\DeclareMathOperator{\Spec}{Spec}
\DeclareMathOperator{\diam}{diam}
\DeclareMathOperator{\dist}{dist}
\newcommand{\HH}{\mathcal H}
\newcommand{\CC}{\mathcal C}
\newcommand{\FF}{\mathcal F}
\title[Diameter and Laplace eigenvalue estimates]{Diameter and Laplace eigenvalue estimates for compact homogeneous Riemannian manifolds}
\author{Emilio~A.~Lauret}
\address{Instituto de Matemática (INMABB), Departamento de Matemática, Universidad Nacional del Sur (UNS)-CONICET, Bahía Blanca, Argentina.}
\email{emilio.lauret@uns.edu.ar}
\subjclass[2020]{Primary 58C40, Secondary 58J50, 22C05, 53C30, 53C17.}
\keywords{Laplace, eigenvalue estimate, diameter, left-invariant metric, homogeneous metric.}
\thanks{This research was supported by grants from FONCyT (PICT-2018-02073) and SGCYT--UNS}
\date{February 8, 2021}
\begin{document}

\begin{abstract}
Let $G$ be a compact connected Lie group and let $K$ be a closed subgroup of $G$. 
In this paper we study whether the functional $g\mapsto \lambda_1(G/K,g)\operatorname{diam}(G/K,g)^2$ is bounded among $G$-invariant metrics $g$ on $G/K$. 
Eldredge, Gordina, and Saloff-Coste conjectured in 2018 that this assertion holds when $K$ is trivial; the only particular cases known so far are when $G$ is abelian, $\operatorname{SU}(2)$, and $\operatorname{SO}(3)$. 
In this article we prove the existence of the mentioned upper bound for every compact homogeneous space $G/K$ having multiplicity-free isotropy representation. 
\end{abstract}

\maketitle

\tableofcontents
	
\section{Introduction}\label{sec:intro}

A Riemannian manifold is called homogeneous if the action of its isometry group on it is transitive. 
In \cite{Li80}, Peter Li proved for every compact connected homogeneous Riemannian manifold $(M,g)$ that 
\begin{equation}\label{eq1:Li-estimate}
\lambda_1(M,g) \geq \frac{\pi^2/4}{\diam(M,g)^2}.
\end{equation} 
Here, $\diam(M,g)$ stands for the diameter of $(M,g)$ and $\lambda_1(M,g)$ denotes the smallest positive eigenvalue of the Laplace--Beltrami operator associated to $(M,g)$. 
(See \cite{JudgeLyons17} for an improvement of  \eqref{eq1:Li-estimate}.)

The analogous upper bound to \eqref{eq1:Li-estimate} does not exist since 
\begin{equation}
\lambda_1(S^d,g_{\text{round}}) \diam(S^d,g_{\text{round}})^2 = d\pi^2\longrightarrow \infty
\qquad\text{when $d\to\infty$},
\end{equation}
where $g_{\text{round}}$ denotes any round metric on the $d$-sphere $S^d$. 
It is currently unknown whether there exists an upper bound analogous to \eqref{eq1:Li-estimate} when the dimension is fixed. 
Eldredge, Gordina and Saloff-Coste conjectured in \cite{EldredgeGordinaSaloff18} the following. 

\begin{conjecture}\label{conj1:EGS}
For any compact connected Lie group $G$, there is a real number $C$ depending only on $G$ such that $\lambda_1(G,g)\diam(G,g)^2 \leq C$ for every left-invariant metric $g$ on $G$. 
\end{conjecture}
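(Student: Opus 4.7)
The plan is to translate the conjecture into a question about inner products on $\fg = \op{Lie}(G)$ via representation theory, and then bound $\lambda_1$ and $\diam^2$ as explicit functions of the associated Gram operator. Fix once and for all a bi-invariant background inner product $\langle\cdot,\cdot\rangle_0$ on $\fg$. Any left-invariant metric on $G$ corresponds uniquely to a positive-definite, $\langle\cdot,\cdot\rangle_0$-self-adjoint endomorphism $A$ of $\fg$ via $\langle X,Y\rangle_A := \langle AX,Y\rangle_0$. Because $\lambda_1(G,g)\diam(G,g)^2$ is invariant under the rescaling $g\mapsto cg$, one may normalize so that the largest eigenvalue of $A$ equals $1$; then $A$ ranges over a relatively compact set whose boundary consists of degenerate $A$'s with some eigenvalues equal to $0$, and the content of the conjecture is a uniform bound up to and including this boundary.

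For the eigenvalue estimate, invoke the Peter--Weyl decomposition $L^2(G)=\bigoplus_{\pi\in\widehat G} V_\pi^*\otimes V_\pi$: the Laplacian of $(G,\langle\cdot,\cdot\rangle_A)$ acts on the $\pi$-isotypic piece as $-\sum_i d\pi(X_i)^2$, where $\{X_i\}$ is a $\langle\cdot,\cdot\rangle_A$-orthonormal basis of $\fg$. Hence
\[
\lambda_1(G,g) \;=\; \min_{\pi\ne \mathbf 1}\; \min \Spec\Bigl( -\sum_i d\pi(X_i)^2 \Bigr),
\]
so upper-bounding $\lambda_1$ requires producing only a \emph{single} nontrivial $\pi$ whose operator has a small eigenvalue. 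I would select $\pi$ adapted to the eigenvector of $A$ with the largest eigenvalue, expecting that a low-weight representation acting nontrivially along that direction yields an eigenvalue of order $1$ after normalization.

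The diameter estimate is the harder half. The natural move is to express an arbitrary $x\in G$ as a short product of one-parameter subgroup segments and sum the $\langle\cdot,\cdot\rangle_A$-lengths. When some eigenvalues of $A$ are tiny the metric collapses along those directions; one then invokes a Carnot--Carathéodory-type argument, using that short-direction subgroups, together with their iterated Lie brackets (which are long-direction by the assumption on degeneration), generate $G$. The task is to quantify this: produce a path from $e$ to any $x\in G$ whose total $\langle\cdot,\cdot\rangle_A$-length is bounded by a constant depending only on the Lie bracket structure of $\fg$, uniformly in $A$ within the normalized range.

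The principal obstacle is controlling the interaction between these two bounds under arbitrary degenerations of $A$: one must rule out any eigenvalue pattern where the diameter blows up faster than $\lambda_1^{-1/2}$ decays. In the setting of the paper's main theorem, where $G/K$ has \emph{multiplicity-free} isotropy representation, this tension becomes tractable because Ad$(K)$-invariance forces $A$ to be diagonal in the fixed decomposition $\fg/\fk = \fm_1\oplus\cdots\oplus\fm_s$, reducing the parameter space to $(t_1,\ldots,t_s)\in\R_{>0}^s$---one scalar per irreducible summand. After normalizing $\max_i t_i=1$, one can hope to analyze the finitely many boundary faces $\{t_i = 0 : i\in I\}$ separately, combining explicit Casimir formulas with generator-length bookkeeping to establish the desired uniform estimate.
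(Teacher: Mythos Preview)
The statement you are attempting to prove is Conjecture~\ref{conj1:EGS}, and the paper does \emph{not} prove it. The paper explicitly records it as an open conjecture of Eldredge, Gordina, and Saloff-Coste, settled only for $G$ abelian, $\SU(2)$, and $\SO(3)$. There is therefore no ``paper's own proof'' to compare against; the paper's contributions are the partial results Theorems~\ref{thm1:comparison}, \ref{thm1:multfreeisotropy}, and \ref{thm1:spheres}, none of which implies the full conjecture.

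Your proposal is not a proof either, and you effectively concede this: after setting up the normalization and the Peter--Weyl description of $\lambda_1$, you identify ``the principal obstacle'' --- controlling the competition between the blow-up of $\diam$ and the decay of $\lambda_1$ under arbitrary degenerations of $A$ --- and then do not resolve it. The sentence ``I would select $\pi$ adapted to the eigenvector of $A$ with the largest eigenvalue, expecting\ldots'' is a hope, not an argument; likewise the Carnot--Carath\'eodory paragraph for the diameter is a description of the difficulty rather than a bound. The heart of the conjecture is precisely the uniformity you are asking for, and nothing in your outline supplies it. In the final paragraph you retreat to the multiplicity-free isotropy case, which is Theorem~\ref{thm1:multfreeisotropy}, a different (and weaker) statement. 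If your intention was to prove \emph{that} theorem, your sketch is in the right spirit but omits the mechanism the paper actually uses: for each ordering of the scalars $t_i$ the paper sandwiches $\diam$ and $\lambda_1$ between quantities coming from a fixed sub-Riemannian structure $(G/K,\HH_k,g_0|_{\HH_k})$ and a fixed singular Riemannian structure $(G/K,g_0|_{\CC_k}^*)$ (Propositions~\ref{prop3:diam-k} and \ref{prop4:lambda1-k}), and then invokes Chow--Rashevskii, H\"ormander hypoellipticity, and the absence of dense proper subgroups in a compact semisimple group to show these bounds are finite and positive (Proposition~\ref{prop6:ordered-estimates}, Theorem~\ref{thm5:EGSpartial}). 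Your ``boundary faces'' heuristic does not substitute for these ingredients.
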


Lie groups endowed with left-invariant metrics form a subclass of homogeneous Riemannian manifolds. 
The aim of this paper is to study the extension of this conjecture to homogeneous Riemannian manifolds. 
It is well known that for every homogeneous Riemannian manifold $(M,g)$ there are Lie groups $K\subset G$ such that $(M,g)$ is isometric to $G/K$ endowed with some $G$-invariant metric.

\begin{question}\label{question}
Given any compact connected Lie group $G$ and any closed subgroup $K$ of $G$ such that the quotient $G/K$ is connected, is there $C>0$ depending only on $(G,K)$ such that $\lambda_1(G/K,g)\diam(G/K,g)^2 \leq C$ for every $G$-invariant metric $g$ on $G/K$?
\end{question}
 
The natural extension of Conjecture~\ref{conj1:EGS} to the homogeneous setting is the following:
\begin{quote}
\em 
Given any compact connected Lie group $G$ and any closed subgroup $K$ of $G$ such that $G/K$ is connected, there is $C=C(G,K)>0$ such that 
\begin{equation}\label{eq1:estimate}
\lambda_1(G/K,g) \leq \frac{C}{\diam(G/K,g)^2}
\end{equation}
for every $G$-invariant metric $g$ on $G/K$. 
\end{quote}

The next result tells us that a positive answer to Question~\ref{question} for $(G,\{e\})$ yields the same positive answer for $(G,K)$ for every closed connected subgroup $K$ of $G$. 
However, its usefulness is very limited at the moment since Question~\ref{question} for $(G,\{e\})$ is known only for a few cases (described below).

\begin{theorem}\label{thm1:comparison}
Let $G$ be any compact connected Lie group.
Assume there is $C>0$ such that $\lambda_1(G,g) \diam(G,g)^2 \leq C$ for every left-invariant metric $g$ on $G$.
Then, for any closed connected subgroup $K$ of $G$ with $G/K$ connected, $\lambda_1(G/K,h) \diam(G/K,h)^2 \leq C$ for every $G$-invariant metric $h$ on $G/K$. 
\end{theorem}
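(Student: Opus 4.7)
The plan is to produce, for each $G$-invariant metric $h$ on $G/K$, a single left-invariant metric $g_t$ on $G$ (with $t>0$ sufficiently small) such that
\begin{equation*}
\lambda_1(G,g_t) \;=\; \lambda_1(G/K,h)
\qquad\text{and}\qquad
\diam(G,g_t) \;\geq\; \diam(G/K,h).
\end{equation*}
The hypothesis then immediately yields $\lambda_1(G/K,h)\diam(G/K,h)^2 \leq \lambda_1(G,g_t)\diam(G,g_t)^2 \leq C$.

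To construct $g_t$, fix an $\Ad(K)$-invariant reductive decomposition $\fg = \fk \oplus \fm$ and a bi-invariant inner product $B_\fk$ on $\fk$, both available because $K$ is compact. The metric $h$ is encoded by an $\Ad(K)$-invariant inner product $B_\fm$ on $\fm$. Let $g_t$ be the left-invariant metric on $G$ induced by the orthogonal sum $tB_\fk \oplus B_\fm$ on $\fg$. Since this inner product is $\Ad(K)$-invariant, $g_t$ is also right $K$-invariant, and the projection $\pi\colon(G,g_t) \to (G/K,h)$ is a Riemannian submersion (its fibers are totally geodesic thanks to the bi-invariance of $B_\fk$ on $\fk$). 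The diameter comparison $\diam(G,g_t) \geq \diam(G/K,h)$ is then immediate from the submersion, since projections of minimizing curves in $G$ are curves of no greater length in $G/K$.

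The heart of the argument is the spectral analysis. By Peter--Weyl, $L^2(G) \cong \bigoplus_{\pi\in\hat G} V_\pi \otimes V_\pi^*$ as a left $G$-module, and the left-invariant operator $\Delta_{g_t}$ acts on each summand as $\Id_{V_\pi}\otimes A_\pi^{(t)}$, where $A_\pi^{(t)} = C_\fm^\pi + t^{-1} C_\fk^\pi$ on $V_\pi^*$ and $C_\fm^\pi$, $C_\fk^\pi$ are the partial Casimirs computed from orthonormal bases of $(\fm,B_\fm)$ and $(\fk,B_\fk)$. A short calculation---using the skew-symmetry of $\ad(Y)|_\fm$ on $(\fm,B_\fm)$ for $Y\in\fk$, a consequence of $\Ad(K)$-invariance of $B_\fm$---shows that $C_\fm^\pi$ commutes with the $\fk$-action on $V_\pi^*$; bi-invariance of $B_\fk$ yields the analogous statement for $C_\fk^\pi$. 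Hence $A_\pi^{(t)}$ preserves the orthogonal splitting $V_\pi^* = (V_\pi^*)^K \oplus ((V_\pi^*)^K)^\perp$ (using connectedness of $K$ to equate $K$- and $\fk$-invariants). On $(V_\pi^*)^K$ the term $C_\fk^\pi$ vanishes, so $A_\pi^{(t)}$ reduces to the $t$-independent operator $C_\fm^\pi|_{(V_\pi^*)^K}$, and summed over $\pi$ these operators realize exactly the spectrum of $\Delta_h$ on $L^2(G/K) \cong \bigoplus_\pi V_\pi \otimes (V_\pi^*)^K$. On the complementary subspace one has $C_\fm^\pi \geq 0$ and $C_\fk^\pi \geq c_0\,\Id$ uniformly, where $c_0 > 0$ is the smallest nonzero Casimir eigenvalue among irreducible $K$-representations (strictly positive since highest weights lie in a discrete lattice), so $A_\pi^{(t)} \geq c_0/t$ there.

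Thus the spectrum of $\Delta_{g_t}$ decomposes as the disjoint union of the spectrum of $\Delta_h$ and a set bounded below by $c_0/t$. Choosing any $t < c_0/\lambda_1(G/K,h)$ forces $\lambda_1(G,g_t) = \lambda_1(G/K,h)$, closing the argument. The main technical obstacle is the spectral decomposition itself---verifying that the partial Casimir $C_\fm^\pi$ preserves the $K$-invariant subspace $(V_\pi^*)^K$, so that $A_\pi^{(t)}$ is block-diagonal with respect to the splitting above, and securing the uniform positive lower bound $c_0$ on the nonzero Casimir eigenvalues of $K$.
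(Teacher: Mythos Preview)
Your proof is correct and follows essentially the same strategy as the paper: construct the family $g_t = t\,B_\fk \oplus B_\fm$, use the projection $(G,g_t)\to(G/K,h)$ to compare diameters, and analyze the spectrum via Peter--Weyl by splitting each $V_\pi$ into its $K$-fixed part and the complement, on which the $\fk$-Casimir is bounded below by a positive constant. The only cosmetic differences are that the paper phrases the diameter bound through singular Riemannian metrics rather than the submersion, argues invariance of $V_\pi^K$ under $C_\fm$ via the isotypical decomposition rather than your direct commutation computation, and packages the conclusion as $\lambda_1(G/K,h)=\lim_{t\to 0}\lambda_1(G,g_t)$ with a $\limsup$ instead of your (equivalent) observation that equality already holds for all sufficiently small $t$.
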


Our approach will extend the tools developed in \cite{Lauret-EGSconj} for the Lie group case to the homogeneous setting.
The main difference can be observed in the number of cases where we have a complete answer to Question~\ref{question} (always affirmatively so far). 

On the one hand, Question~\ref{question} for $(G,\{e\})$ has been answered only for $G$ abelian, $G=\SU(2)$, and $G=\SO(3)$ in \cite{EldredgeGordinaSaloff18} after elaborated proofs (see also \cite[Thm.~1.4]{Lauret-SpecSU(2)}). 
On the other hand, for instance, Question~\ref{question} is trivially affirmative for every isotropy irreducible pair $(G,K)$ since there is a unique $G$-invariant metric on $G/K$ up to positive scaling and the term $\lambda_1(G/K,g) \diam(G/K,g)^2$ is invariant by homotheties. 
Moreover, we are able to answer Question~\ref{question} for a large subclass of homogeneous spaces $G/K$.

\begin{theorem}\label{thm1:multfreeisotropy}
Let $G$ be a compact connected semisimple Lie group and let $K$ be a closed subgroup of $G$ such that $G/K$ is connected. 
Assume that the isotropy representation of $G/K$ is multiplicity free, that is, it decomposes as a direct sum of pairwise non-equivalent irreducible representations. 
Then, there is $C=C(G,K)>0$ such that 
$$
\lambda_1(G/K,g) \leq \frac{C}{\diam(G/K,g)^2}
$$ 
for every $G$-invariant metric $g$ on $G/K$. 
\end{theorem}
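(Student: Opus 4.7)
The plan is to adapt the representation-theoretic strategy of \cite{Lauret-EGSconj} (developed for Lie groups) to the homogeneous setting. Since $G$ is compact semisimple, fix an $\Ad(G)$-invariant inner product $B$ on $\fg$, and decompose the isotropy representation as $\fp = \fp_1\oplus\dots\oplus\fp_r$ into the pairwise nonequivalent irreducible $\Ad(K)$-summands, which is $B$-orthogonal by Schur's lemma. Every $G$-invariant metric on $G/K$ then has the form $g_x|_{\fp_i} = x_i B|_{\fp_i}$ for a unique $x = (x_1,\dots,x_r)\in\R_{>0}^r$. Since the functional $g \mapsto \lambda_1\diam^2$ is scale-invariant in $x$, I would prove two separate scale-invariant estimates
\begin{equation*}
	\lambda_1(G/K,g_x) \leq \frac{C_1}{\max_i x_i}
	\qquad\text{and}\qquad
	\diam(G/K,g_x)^2 \leq C_2 \sum_i x_i,
\end{equation*}
whose product is at most $C_1 C_2\, r$.

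For the diameter bound, compactness and connectedness of $G$ yield finitely many indices $i_1,\dots,i_N \in\{1,\dots,r\}$ and a constant $R > 0$ such that the map $(X_1,\dots,X_N)\mapsto \exp(X_1)\cdots\exp(X_N)K$ is surjective onto $G/K$ whenever $X_\ell\in \fp_{i_\ell}$ has $\|X_\ell\|_B \leq R$. The $g_x$-length of the resulting piecewise-one-parameter-subgroup path is at most $R\sum_\ell \sqrt{x_{i_\ell}}$, and Cauchy--Schwarz gives $\diam(g_x)^2 \leq R^2 N\sum_\ell x_{i_\ell} \leq C_2\sum_i x_i$ with $C_2 = C_2(G,K)$.

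The eigenvalue estimate is the main obstacle. By Peter--Weyl, $L^2(G/K) \cong \bigoplus_\pi V_\pi\otimes (V_\pi^*)^K$, and on each $\pi$-isotypic the Laplacian of $g_x$ acts through the $K$-equivariant symmetric operator $\Delta_x^\pi := \sum_j x_j^{-1} T_j^\pi$ on $V_\pi^K$, where $T_j^\pi := -\sum_\alpha \pi(X_{j,\alpha})^2$ for $\{X_{j,\alpha}\}$ a $B$-orthonormal basis of $\fp_j$. To achieve $\lambda_1(g_x)\leq C_1/\max_i x_i$, for each $i$ one would construct a nontrivial irreducible spherical representation $V_{\pi^{(i)}}$ and a $K$-fixed vector $v_i$ with $\langle T_j^{\pi^{(i)}} v_i,v_i\rangle = 0$ for $j\neq i$ and $\langle T_i^{\pi^{(i)}} v_i,v_i\rangle \leq c_i$; Rayleigh--Ritz then gives $\lambda_1(g_x)\leq c_i/x_i$, and minimizing over $i$ (taking $i$ with $x_i = \max_j x_j$) yields the bound. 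This construction is what makes the multiplicity-free hypothesis essential: it supplies enough Schur-type rigidity to identify, for each $\fp_i$, a ``fundamental'' spherical representation of $(G,K)$ adapted to $\fp_i$, analogous to the lowest-weight characters of one-parameter subgroups used for Lie groups in \cite{Lauret-EGSconj}. Verifying the existence of such $V_{\pi^{(i)}}$ and $v_i$ uniformly across the combinatorics of $(G,K)$ is the technical heart of the argument.
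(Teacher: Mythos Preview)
Your two proposed estimates, $\lambda_1(g_x)\leq C_1/\max_i x_i$ and $\diam(g_x)^2\leq C_2\sum_i x_i$, do not both hold, and the first one is in general false. The condition $\langle T_j^{\pi^{(i)}}v_i,v_i\rangle=0$ forces $\pi^{(i)}(X)v_i=0$ for every $X\in\fp_j$, so $v_i$ is fixed by the connected subgroup of $G$ generated by $K$ and $\exp\big(\bigoplus_{j\neq i}\fp_j\big)$. Whenever $\bigoplus_{j\neq i}\fp_j$ is bracket generating (Definition~\ref{def2:bracket-generating}), that subgroup is all of $G$ and no nontrivial $v_i$ exists. This already occurs for the full flag manifold $\SU(3)/T^2$: the three root spaces satisfy $[\fp_\alpha,\fp_\beta]\supset\fp_{\alpha+\beta}$, so any two of the three isotropy summands bracket generate, and your construction fails for every $i$. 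Concretely, with $x_1=x_2=1$ and $x_3=t\to\infty$, Proposition~\ref{prop4:lambda1-k} (applied with $k=2$) shows $\lambda_1(g_x)$ stays bounded below by the positive sub-Laplacian eigenvalue $\lambda_1(G/K,\fp_1\oplus\fp_2,g_0)$, so $\lambda_1(g_x)\cdot\max_i x_i\to\infty$.

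The paper's argument avoids this by \emph{not} decoupling the two estimates. For each ordering of the $\fp_i$ by increasing $x_i$, it identifies the smallest index $k$ at which $\fp_1\oplus\dots\oplus\fp_k$ becomes bracket generating, and then bounds \emph{both} quantities in terms of the same $x_k$: Proposition~\ref{prop3:diam-k} gives $\diam(g_x)^2\leq \diam(G/K,\HH_k,g_0|_{\HH_k})^2\,x_k$ via the sub-Riemannian structure on $\HH_k$, while Proposition~\ref{prop4:lambda1-k} gives $\lambda_1(g_x)\leq \lambda_1(\Delta_{\Id_\fp}|_{C^\infty_{\FF_k}(G/K)})/x_k$ using a test vector annihilated only by $\FF_k=\fp_1\oplus\dots\oplus\fp_{k-1}$, which exists precisely because $\FF_k$ is \emph{not} bracket generating. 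Semisimplicity of $G$ enters here (via \cite{MaciasVirgos}) to guarantee that the subgroup generated by $K\cup\exp(\FF_k)$ has closed closure strictly smaller than $G$, so that such test vectors and the associated singular-metric diameter lower bound are available. Taking the maximum over the finitely many orderings finishes the proof (Theorem~\ref{thm5:EGSpartial}). The moral is that the correct ``scale'' is $x_k$ at the bracket-generating threshold, not $\max_i x_i$.
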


The class of compact connected homogeneous spaces $G/K$ with multiplicity-free isotropy representation is quite large. 
It contains for instance generalized flag manifolds (see e.g.\ \cite{Arvanitoyeorgos06}) and many real flag manifolds.
There are classifications and lists for low values of the number of different irreducible components. 
See \cite{WangZiller91}, \cite{DickinsonKerr08}, and \cite{ChenKangLiang16,Nikonorov16} for the one, two, and three-components cases respectively.

Theorem~\ref{thm1:multfreeisotropy} follows immediately from Theorem~\ref{thm5:EGSpartial}. 
This more general result gives a partial (affirmative) answer to Question~\ref{question} in the sense that it ensures the existence of $C>0$ such that $\lambda_1(G/K,g)\diam(G/K,g)^2 \leq C$ for every $g$ in certain subsets of $G$-invariant metrics on $G/K$. 
More precisely, given a decomposition in $\Ad(K)$-invariant subspaces $\fp=\fp_1\oplus\dots\oplus \fp_q$ of the orthogonal complement $\fp$ of $\fk$ in $\fg$ with respect to a bi-invariant inner product $\innerdots_0$ on $\fg$, the subset is given by $G$-invariant metrics associated to the $\Ad(K)$-invariant inner products on $\fp$ given by 
\begin{equation}\label{eq1:diagonal}
x_1\innerdots_0|_{\fp_1\times \fp_1}+\dots+ x_q\innerdots_0|_{\fp_q \times \fp_q},
\end{equation}
for $x_1,\dots,x_q>0$. 
(See Subsection~\ref{subsec:G-invmetrics} for a detailed description of the correspondence between $G$-invariant metrics on $G/K$ and $\Ad(K)$-invariant inner products on $\fp$.)
In particular, Theorem~\ref{thm5:EGSpartial} for $K$ trivial coincides with \cite[Cor.~1.4]{Lauret-EGSconj}. 

\begin{remark}
The main result in \cite{Lauret-EGSconj} concerning Conjecture~\ref{conj1:EGS} has an involved statement giving a partial answer to Question~\ref{question} for $(G,\{e\})$ with a much larger subset of left-invariant metrics on $G$ as explained above. 
By writing $m=\dim G$, the order of the dimension of this set is $m^2$, the same order as the dimension of the full space of left-invariant metrics on $G$ (which is exactly $\frac12 m(m+1)$), while the dimension of the subset discussed above is $m$. 

It is possible to give the analogous result to \cite[Thm.~1.3]{Lauret-EGSconj} in the homogeneous context. 
However, the corresponding statement is again quite involved and it does not worth in the understanding of Conjecture~\ref{conj1:EGS} in the author's opinion. 
\end{remark}

At this point, it is worth mentioning another partial answer to Question~\ref{question} for $(G,\{e\})$.
In \cite{Lauret-natred}, when $G$ is simple, the author proved the existence of $C>0$ satisfying \eqref{eq1:estimate} for every naturally reductive metric $g$ on $G$. 
The class of naturally reductive metrics is considered as a natural extension of symmetric metrics. 

Theorem~\ref{thm5:EGSpartial} can also be applied to some compact homogeneous spaces $G/K$ with non-multiplicity-free isotropy representation. 
For instance, Theorem~\ref{thm5:EGSpartial} answers  Question~\ref{question} (positively) for the case $(G,K)=(\Sp(n+1), \Sp(n))$, in which case $G/K$ is diffeomorphic to the $(4n+3)$-dimensional sphere $S^{4n+3}$ and its isotropy representation decomposes as the standard representation plus three times the trivial representation. 
As a consequence of the classification of homogeneous metrics on the underlying manifold of a simply connected compact symmetric spaces of real rank one given by Ziller~\cite{Ziller82}, we obtain the following result. 

\begin{theorem}\label{thm1:spheres}

For every positive integer $d$, there is $C=C(d)>0$ such that 
$$
\lambda_1(X,g) \leq \frac{C}{\diam(X,g)^2}
$$ 
for every homogeneous metric $g$ on $X$, where $X$ is the underlying differentiable manifold of any compact simply connected Riemannian symmetric space of real rank one (i.e.\ $X=S^d$, $X=P^{d/2}(\C)$ if $d$ is even, $X=P^{d/4}(\mathbb H)$ if $d$ is divisible by $4$, and $X=P^{2}(\mathbb O)$ if $d=16$).
\end{theorem}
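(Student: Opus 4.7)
The plan is to combine Ziller's classification \cite{Ziller82} of homogeneous metrics on the underlying manifold of a compact simply connected rank-one symmetric space with the partial answer to Question~\ref{question} supplied by Theorem~\ref{thm5:EGSpartial}. Ziller's result states that every homogeneous metric on $X$ arises as a $G$-invariant metric on a quotient $G/K$, where the pair $(G,K)$ ranges over an \emph{explicit finite list} depending only on the diffeomorphism type of $X$ (hence only on $d$). Since $C$ is allowed to depend on $d$, it therefore suffices to obtain, for each pair $(G,K)$ on this list, a uniform bound $\lambda_1(G/K,g)\diam(G/K,g)^2\le C(G,K)$ over all $G$-invariant metrics $g$, and then to take the maximum over the finite list.

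For each presentation $(G,K)$ on Ziller's list, I would split the analysis into two cases according to the isotropy representation of $G/K$. When the isotropy representation is multiplicity free, the bound is immediate from Theorem~\ref{thm1:multfreeisotropy}. This already handles the projective space cases (where the isotropy is in fact irreducible, and the bound is trivial up to scaling) together with most of the sphere presentations (such as $\SO(d+1)/\SO(d)$, $\SU(n+1)/\SU(n)$, $\Spin(7)/G_2$, $G_2/\SU(3)$, and $\Spin(9)/\Spin(7)$). The remaining cases have a non-multiplicity-free isotropy, the main one being $(G,K)=(\Sp(n+1),\Sp(n))$ realizing $S^{4n+3}$, whose isotropy decomposes as the standard $\Sp(n)$-representation on $\R^{4n}$ plus three copies of the trivial representation; the paragraph preceding the statement already records that Theorem~\ref{thm5:EGSpartial} handles exactly this pair, and the few remaining non-multiplicity-free entries in Ziller's list (for example $\Sp(n+1)\Ut(1)/\Sp(n)\Ut(1)$) are treated in the same way.

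The main obstacle is that Theorem~\ref{thm5:EGSpartial} controls only the \emph{diagonal} metrics of the form \eqref{eq1:diagonal} associated to a fixed $\Ad(K)$-invariant decomposition $\fp=\fp_1\oplus\dots\oplus\fp_q$, while the full cone of $G$-invariant metrics on a non-multiplicity-free $G/K$ is strictly larger and mixes the equivalent isotypic components. The way to overcome this is by exploiting the action of the normalizer $N_G(K)/K$ on the space of $G$-invariant metrics: this action is by pull-back of isometries, so it preserves both $\lambda_1$ and $\diam$, hence also the scale-invariant product $\lambda_1\diam^2$. The key verification, which I would carry out case-by-case using Ziller's explicit description of the moduli of invariant metrics, is that each $G$-invariant metric on $G/K$ lies in the $N_G(K)/K$-orbit of a diagonal metric of the form \eqref{eq1:diagonal} with respect to a suitable decomposition $\fp=\fp_1\oplus\dots\oplus\fp_q$. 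Once this reduction is in place, Theorem~\ref{thm5:EGSpartial} supplies the bound $C(G,K)$ for each of the finitely many presentations, and the maximum of these constants gives the required $C=C(d)$.
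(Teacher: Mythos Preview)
Your strategy is correct and is exactly the one the paper follows: reduce via Ziller's classification to a finite list of pairs $(G,K)$, apply Theorem~\ref{thm1:multfreeisotropy} to the multiplicity-free ones, and for the remaining ones use Ziller's observation that every $G$-invariant metric is isometric to a diagonal one with respect to a fixed decomposition, then invoke Theorem~\ref{thm5:EGSpartial}.

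A few corrections on the bookkeeping. The paper's version of Ziller's list has nine pairs, and only \emph{two} of them fail to be multiplicity free: $\SU(2)/\{e\}\simeq S^3$ (isotropy $3W_0$) and $\Sp(n+1)/\Sp(n)\simeq S^{4n+3}$ for $n\ge 1$ (isotropy $3W_0\oplus W_1$). The pair $\Sp(n+1)\Ut(1)/\Sp(n)\Ut(1)$ you mention is not on the list and is not needed; the presentations $G_2/\SU(3)$ and $\Spin(7)/G_2$ are isotropy irreducible and hence subsumed by the round-sphere entry. For $\Sp(n+1)/\Sp(n)$ the paper simply cites Ziller for the fact that every $G$-invariant metric is isometric to some $g_\Phi$ with $\Phi$ preserving $\fp=\fa_1\oplus\fa_2\oplus\fa_3\oplus\fp_1$; your normalizer argument is exactly the mechanism behind that, since $N_G(K)/K\simeq\Sp(1)$ acts on the trivial isotypic piece $\fp_0\cong\R^3$ through $\SO(3)$ and therefore diagonalizes any inner product there. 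The paper disposes of the $\SU(2)$ case by quoting \cite{EldredgeGordinaSaloff18}; note that your normalizer approach handles it as well (here $N_G(K)/K=\SU(2)$ acts on $\su(2)$ by $\Ad$ with image $\SO(3)$), so in that single respect your outline is slightly more self-contained than the paper's proof.
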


\subsection*{Organization}
Section~\ref{sec:preliminaries} recalls the (implicit) description of the spectrum of a compact homogeneous Riemannian manifold. 
It also includes some estimates for the diameter and first Laplace eigenvalue of some $G$-invariant non-Riemannian structures on a homogeneous space. 
Theorem~\ref{thm1:comparison} is proved in Section~\ref{sec:comparison}. 
Section~\ref{sec:diam} and \ref{sec:eigenvalues} establish the estimates for the diameter and the first Laplace eigenvalue respectively, in terms of the numbers $x_1,\dots,x_q$ in \eqref{eq1:diagonal}. 
The last section proves Theorem~\ref{thm5:EGSpartial}, which is the most general result of the article concerning Question~\ref{question}, and also  Theorems~\ref{thm1:multfreeisotropy} and \ref{thm1:spheres} as consequences of it.

\subsection*{Acknowledgments}
The author is grateful for helpful and motivating conversations with 
Andreas Arvanitoyeorgos and Jorge Lauret. 
The author is greatly indebted to the referees for many accurate comments and clarifications that have helped to improve significantly the presentation of the paper.

\section{Preliminaries}\label{sec:preliminaries}

In this section we fix a parameterization of $G$-invariant metrics on a homogeneous space $G/K$. 
Then, we recall the well-known description of the spectrum of the Laplace--Beltrami operator associated to such metric. 
We conclude with a study of the diameter and the first eigenvalue of the Laplacian associated to two left-invariant non-Riemannian structures: sub-Riemannian manifolds and singular Riemannian manifolds.

\begin{remark}\label{rem2:assumption+notation}
Throughout the article, 
\begin{itemize}
\item we assume that $G$ is a compact connected Lie group and $K$ is a closed subgroup of $G$ such that $G/K$ is connected;

\item we denote by $\fg$ and $\fk$ the Lie algebras of $G$ and $K$ respectively;

\item we write $m=\dim \fg$ and $n=\dim G/K=\dim \fg-\dim \fk$, thus $\dim \fk=m-n$;

\item we fix an $\Ad(G)$-invariant inner product $\innerdots_0$ on $\fg$, which exists because $G$ is compact;

\item let $\fp$ be the orthogonal complement of $\fk$ in $\fg$ with respect to $\innerdots_0$.
\end{itemize}
\end{remark}

\subsection{Invariant metrics on a homogeneous space}\label{subsec:G-invmetrics}
Each $X\in\fg$ defines a vector field on $G/K$ given by $X_{aK}=\left.\frac{d}{dt}\right|_{t=0} \exp(tX)aK$ for $a\in G$. 
The map $X\mapsto X_{eK}$ identifies $\fp$ with the tangent space $T_{eK}G/K$.

The subspace $\fp$ of $\fg$ is invariant by $\Ad(a)$ for all $a\in K$, thus it has the structure of a $K$-module. 
The isotropy representation of $G/K$ coincides with this representation, $\Ad:K\to \GL(\fp)$.

For $a\in G$, we define $\tau_a:G/K\to G/K$ given by $\tau_a(bK)= abK$ for $b\in G$. 
A Riemannian metric $g$ on $G/K$ is called \emph{$G$-invariant} if $g_{eK}(\cdot,\cdot)=g_{aK}(d\tau_a\cdot ,d\tau_a\cdot )$ for all $a\in G$; in other words, $\tau_a$ is an isometry for all $a\in G$. 
In this case, it follows that $(G/K,g)$ is a \emph{homogeneous Riemannian manifold}, that is, a Riemannian manifold whose isometry group acts transitively on it.
Every compact homogeneous Riemannian manifold is isometric to some $(G/K,g)$ as above. 
We denote by $\mathcal M(G,K)$ the space of $G$-invariant metrics on $G/K$.

It is well known that there is a one-to-one correspondence between the space of $G$-invariant metrics on $G/K$ and the set of $\Ad(K)$-invariant inner products on $\fp$.
Given any $\Ad(K)$-invariant inner product $\innerdots$ on $\fp$, we define the Riemannian metric on $G/K$ by $g_{aK}(\cdot,\cdot)=\inner{d\tau_{a^{-1}}\cdot}{ d\tau_{a^{-1}}\cdot}$ for all $a\in G$ via the identification $T_{eK}G/K\equiv \fp$. 
The metric is well defined since the inner products $g_{aK}$ and $g_{bK}$ for $aK=bK$ coincide because $\innerdots$ is $\Ad(K)$-invariant. 

To parametrize $\mathcal M(G,K)$ we need to consider the decomposition of the isotropy representation as irreducible components. 
There are pairwise non-equivalent irreducible (real) representations $W_1,\dots,W_r$ of $K$ and $q_1,\dots,q_r\in\N$ such that 
\begin{equation}\label{eq2:isotropydecomposition}
\fp\simeq q_1 W_1\oplus\dots\oplus q_rW_r
\qquad\text{as $K$-modules,}
\end{equation}
where $q_jW_j$ denotes $q_j$-copies of $W_j$. 
As a consequence, there are subspaces $\fc_1,\dots,\fc_r$ of $\fp$ such that
\begin{equation}\label{eq2:isotypical}
\fp=\fc_1\oplus\dots \oplus \fc_r
\qquad\text{and}\qquad
\text{$\fc_j\simeq q_jW_j$ as $K$-modules}.
\end{equation}
This decomposition is unique up to order. 
The subspace $\fc_j$ of $\fp$ is called the \emph{isotypical} component of type $W_j$. 
One can check that $\fc_1,\dots,\fc_r$ are mutually orthogonal with respect to any $\Ad(K)$-invariant inner product on $\fp$. 

Of course, $\innerdots_0|_{\fp\times \fp}$ is $\Ad(K)$-invariant. 
Every $\Ad(K)$-invariant inner product on $\fp$ is of the form
\begin{equation}\label{eq2:inner_Phi}
\inner{X}{Y}_{\Phi} = \inner{\Phi(X)}{Y}_0
\end{equation}
for $X,Y\in\fp$, for some  $\Phi$ in 
\begin{equation}
\End_K(\fp):=\{\Phi\in\End(\fp): \Phi \Ad(a)=\Ad(a)\Phi\quad\text{for all }a\in K\}
\end{equation}
symmetric and positive definite with respect to $\innerdots_0$. 
We denote by $\Sym_K^+(\fp)$ the set of such elements. 
For $\Phi \in \Sym_K^+(\fp)$, we denote by $g_{\Phi}$ the induced $G$-invariant metric on $G/K$.

It is important to mention that
\begin{equation}
\begin{aligned}
\End_K(\fp)
=\bigoplus_{j=1}^r \End_K(\fc_j) 
\simeq \bigoplus_{j=1}^r \End_K(q_jW_j)
\simeq \bigoplus_{j=1}^r \gl_{q_j}(\End_K(W_j))
\end{aligned}
\end{equation} 
with $\mathbb F_j :=\End_K(W_j)\simeq \R, \C, \mathbb H$ according $W_j$ is of real, complex, or quaternionic type. 
Consequently, $\mathcal M(G,K)$ is in correspondence with 
\begin{equation}
\bigoplus_{j=1}^r\; \{S\in \gl_{q_j}(\mathbb F_j) : \overline{S^t}=S\text{ and } S>0\},
\end{equation}
where the conjugation is induced by the (restriction of the) standard conjugation on the Hamiltonian quaternions $\mathbb H$.

\begin{example}
When $K$ is trivial, one has that $\fp=\fg$. 
Elements in $\mathcal M(G,K)$ are called left-invariant metrics on $G$. 
Its isotropy representation $\fp=\fg$ decomposes as $m$-times the trivial representation, which is of real type. 
Consequently, $\mathcal M(G,K)$ is in correspondence with the space of $m\times m$ (real) positive definite symmetric matrices. 
\end{example}

\begin{example}\label{ex2:multiplicityfree}
We assume that the isotropy representation is \emph{multiplicity free}, that is, $q_j=1$ for all $j$. 
If $\Phi\in \Sym_K^+(\fp)$, since $\Phi(\fc_j)=\fc_j$ and $\fc_j$ is irreducible for all $j$, then $\Phi$ acts by a positive scalar on $\fc_j$. 
That is, there are positive real numbers $\sigma_1,\dots, \sigma_r$ such that 
\begin{equation}
\Phi=\bigoplus_{j=1}^r \, 
\sigma_j^{-2}\,\Id_{\fc_j}. 
\end{equation}
\end{example}

We now return to the general case. We set $q=q_1+\dots+q_r$, which is the number of irreducible components in the isotropy representation.

\begin{notation}\label{not2:diagonaldecomposition}
We fix $\Phi\in \Sym_K^+(\fp)$. 
Since the eigenspaces of $\Phi$ are $\Ad(K)$-invariants, there are non-trivial $\Ad(K)$-invariant subspaces $\fp_{1}, \dots,\fp_{q}$ of $\fp$ and positive real numbers $\sigma_1(\Phi)\geq \dots\geq \sigma_q(\Phi)$ such that $\Phi|_{\fp_{i}}=\sigma_i(\Phi)^{-2}\, \Id_{\fp_{i}}$ for all $1\leq i\leq q$. 
We call to the decomposition
\begin{equation}\label{eq2:diagonaldecomposition}
\fp = \fp_{1}\oplus \dots\oplus \fp_{q}
\end{equation}
a \emph{diagonal decomposition of $\Phi$}. 
This decomposition is unique only if $\sigma_1(\Phi)> \dots> \sigma_q(\Phi)$. 
For each $1\leq j\leq r$, $\fp_i\simeq W_j$ as $K$-modules for exactly $q_j$ indexes $i\in\{1,\dots,q\}$. 
\end{notation}

\begin{remark}\label{rem2:basis}
Given any diagonal decomposition $\fp = \fp_{1}\oplus \dots\oplus \fp_{q}$ of $\Phi$, the subspaces $\fp_{1},\dots,\fp_{q}$ are mutually orthogonal with respect to $\innerdots_{\Phi}$ and $\innerdots_0$. 
Consequently, there exists a $\innerdots_0$-orthonormal basis $\{X_{i,k}: 1\leq i\leq q,\, 1\leq k\leq \dim \fp_i\}$ of $\fp$ such that $\Span_\R\{X_{i,k}: 1\leq k\leq \dim \fp_i\}= \fp_{i}$ for all $i$ and
\begin{equation}
\inner{X_{i,k}} {X_{i',k'}}_{\Phi} = 
\begin{cases}
\sigma_{i}(\Phi)^{-2} \quad&\text{if }i=i',\, k=k',\\
0\quad&\text{otherwise}. 
\end{cases}
\end{equation}
\end{remark}

From now on, for $\Phi,\Psi\in \Sym_K^+(\fp)$, we write $\Phi\geq \Psi$ if $\Phi-\Psi$ is positive semi-definite with respect to $\innerdots_0$, that is, $\inner{\Phi(X)}{X}_0 \geq \inner{\Psi(X)}{X}_0$ for all $X\in \fp$. 
The next result follows immediately from \eqref{eq2:inner_Phi}.

\begin{lemma}\label{lem2:A^tAleqB^tB}
Let $\Phi,\Psi\in \Sym_K^+(\fp)$ such that $\Phi\geq \Psi$. 
Then $g_\Phi\geq g_\Psi$. 
\end{lemma}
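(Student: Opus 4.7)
The plan is to reduce the claim $g_\Phi \geq g_\Psi$ (as Riemannian metrics, pointwise on tangent vectors) to the pointwise inequality of the two inner products $\innerdots_\Phi$ and $\innerdots_\Psi$ on $\fp$, and then propagate this from $T_{eK}G/K$ to an arbitrary tangent space via the $G$-invariance of both metrics. Concretely, I would first rewrite the hypothesis $\Phi \geq \Psi$ in the form
\begin{equation*}
\inner{X}{X}_\Phi = \inner{\Phi(X)}{X}_0 \;\geq\; \inner{\Psi(X)}{X}_0 = \inner{X}{X}_\Psi
\qquad \text{for all } X\in\fp,
\end{equation*}
which is immediate from the definition \eqref{eq2:inner_Phi}. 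Under the identification $T_{eK}G/K\equiv \fp$, this is exactly the desired inequality $g_\Phi \geq g_\Psi$ at the base point $eK$.

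For an arbitrary point $aK\in G/K$ and $v\in T_{aK}G/K$, I would exploit the very definition of the $G$-invariant metric attached to a $\Ad(K)$-invariant inner product, namely $g_\Phi|_{aK}(v,v) = \inner{d\tau_{a^{-1}}v}{d\tau_{a^{-1}}v}_\Phi$, and the analogous formula for $g_\Psi$. Since $d\tau_{a^{-1}}v$ is some element $X_a\in \fp$, applying the base-point inequality to $X_a$ gives
\begin{equation*}
g_\Phi|_{aK}(v,v) = \inner{X_a}{X_a}_\Phi \;\geq\; \inner{X_a}{X_a}_\Psi = g_\Psi|_{aK}(v,v),
\end{equation*}
which is precisely the conclusion.

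I do not foresee any genuine obstacle: the lemma is essentially a tautological translation between the partial order on $\Sym_K^+(\fp)$ and the pointwise order on $G$-invariant metrics, and its role in the paper is to legitimize the subsequent diameter and eigenvalue monotonicity arguments (making bigger $\Phi$ produce bigger distances and smaller Laplace eigenvalues). The only thing worth emphasizing is that the argument uses $G$-invariance in an essential way, so one should make clear that $g_\Phi$ and $g_\Psi$ are evaluated via the same translation $\tau_{a^{-1}}$, ensuring that the vector $X_a\in \fp$ at which we compare the two inner products is the same on both sides.
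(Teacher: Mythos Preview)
Your proposal is correct and matches the paper's approach: the paper simply states that the lemma ``follows immediately from \eqref{eq2:inner_Phi}'', and your argument is precisely the unpacking of that remark, first at the base point via the definition $\inner{X}{X}_\Phi=\inner{\Phi(X)}{X}_0$ and then at a general point via $G$-invariance.
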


\subsection{Spectra of invariant metrics on a homogeneous space}
\label{subsec:spectraleft-invmetrics}

Let $\pi:G\to \GL(V_\pi)$ be a finite dimensional unitary representation of $G$.
As an abuse of notation, we denote again by $\pi$ to its differential, which is a representation of $\mathfrak g$, and the corresponding representation on the universal enveloping algebra $\mathcal U(\fg)$. 
 
Let $\langle \cdot,\cdot\rangle_\pi$ denote the inner product on $V_\pi$. 
Since $\pi(a):V_\pi\to V_\pi$ is unitary for every $a\in G$, $\pi(X)$ is skew-hermitian for every $X\in\mathfrak g$, i.e.\ $\langle \pi(X)v,w\rangle_\pi = -\langle v,\pi(X)w\rangle_\pi$ for all $v,w\in V_\pi$. 
Hence $\pi(-X^2)=-\pi(X)\circ\pi(X)$ is self-adjoint and positive semi-definite.
It follows that $\pi(-C_\Phi)$ is self-adjoint and positive semi-definite.

We denote by $\widehat G$ the unitary dual of $G$, that is, the collection of equivalence classes of irreducible unitary representations of $G$. 
For each $(\pi,V_\pi)\in\widehat G$, one has the embedding
\begin{align} \label{eq2:PeterWeyl-embedding}
	V_\pi^K\otimes V_\pi^* &\longrightarrow C^\infty(G/K)\equiv  C^\infty(G)^K ,\\
	v\otimes\varphi &\longmapsto 
	\big(x\mapsto f_{v\otimes\varphi}(x):= \varphi(\pi(x) v) \big).
	\notag
\end{align}
Here, $C^\infty(G)^K= \{f\in C^\infty(G): f(xa)=f(x) \text{ for all $x\in G$, $a\in K$}\}$ and $V_\pi^K=\{v\in V_\pi: \pi(a)v=v\text{ for all }a\in K \}$. 
Note that 
\begin{equation}\label{eq2:pi(C_k)v=0}
\pi(X)v=0
\qquad\text{for all } v\in V_\pi^K,\; X\in\fk. 
\end{equation}
It is a simple matter to check that $V_\pi^K$ is invariant by $\pi(X)$ for every $X\in \fp$.

We fix $\Phi\in\Sym_K^+(\fp)$.
Given any basis $\{Y_1,\dots,Y_n\}$ of $\fp$, we set $S=(\inner{Y_i}{Y_j}_\Phi)_{i,j}$ and $T=S^{-1}=(t_{i,j})_{i,j}$. 
Both are $n\times n$ positive definite symmetric matrices. 
One can check that the element $\sum_{i,j=1}^n t_{i,j}\; Y_{i}\,Y_{j}$ in $\mathcal U(\fg)$ does not depend on the basis chosen. 
We thus set 
\begin{equation}\label{eq2:C_Phi}
C_\Phi := \sum_{i,j=1}^n t_{i,j}\; Y_{i}\,Y_{j} \in \mathcal U(\fg).
\end{equation}
In particular, if $\{X_{i,k}\}$ is an orthonormal basis of $\fp$ with respect to $\innerdots_0$ that respects a diagonal decomposition $\fp=\fp_1\oplus\dots\oplus \fp_q$ of $\Phi$ as in Remark~\ref{rem2:basis}, then 
\begin{equation}\label{eq2:C_Phi-diagonal}
C_\Phi=\sum_{i=1}^q \sum_{k=1}^{\dim \fp_i} \sigma_i(\Phi)^{2}\, X_{i,k}^2. 
\end{equation}

We denote by $\Delta_\Phi$ the Laplace--Beltrami operator associated to the Riemannian manifold $(G/K,g_\Phi)$. 
One has that (see for instance \cite[Thm.~1 and (3.1)]{MutoUrakawa80}; see also \cite[\S2]{BLPhomospheres}) 
\begin{equation}\label{eq2:Laplacian}
\Delta_\Phi\cdot f_{v\otimes\varphi} = f_{(\pi(-C_\Phi)v)\otimes\varphi}
\end{equation}
for $v\in V_\pi^K$ and $\varphi\in V_\pi^*$. 
Note that $\pi(-C_\Phi)(V_\pi^K)\subset V_\pi^K$ since $V_\pi^K$ is invariant by $\pi(X)$ for all $X\in\fp$.  

Suppose that $v\in V_\pi^K$ is an eigenvector of the finite-dimensional linear operator $\pi(-C_\Phi)|_{V_\pi^K}:V_\pi^K \to V_\pi^K$ associated to the eigenvalue $\lambda$, i.e.\ $\pi(-C_\Phi)v=\lambda v$. 	
Then, 
\begin{equation}
\Delta_\Phi \cdot f_{v\otimes\varphi} = f_{(\pi(-C_\Phi)v)\otimes \varphi}
= f_{(\lambda v)\otimes \varphi}=\lambda\, f_{v\otimes\varphi},
\end{equation}
that is, $f_{v\otimes \varphi}$ is an eigenfunction of $\Delta_\Phi$ with eigenvalue $\lambda$, for any $\varphi\in V_\pi^*$.

The Hilbert space $L^2(G/K)\equiv L^2(G)^K:= \{f\in L^2(G): f(xa)=f(x)\text{ for all }a\in K\}$ has a canonical structure of $G$-module, namely, the \emph{left-regular representation} given by $(a\cdot f)(x)=f(a^{-1}x)$ for $a,x\in G$ and $f\in L^2(G)^K$. 
This representation is unitary. 
The Peter-Weyl Theorem (see e.g.\ \cite[Thm.~1.3]{Takeuchi}) ensures that this representation decomposes as
\begin{equation}\label{eq2:PeterWeyl}
L^2(G)\simeq \bigoplus_{\pi\in \widehat G} V_\pi^K \otimes V_\pi^*,
\end{equation}
where the embedding of $V_\pi^K\otimes V_\pi^*$ in $L^2(G/K)$ is as in \eqref{eq2:PeterWeyl-embedding}.
The action of an element $a\in G$ on $V_\pi^K \otimes V_\pi^*$ is given by $a\cdot (v\otimes\varphi )= v\otimes (\pi^*(a)\varphi)$ since 
\begin{equation}\label{eq:GactiononL^2(G)}
(a\cdot f_{v\otimes \varphi})(x) = f_{v\otimes \varphi}(a^{-1}x) = \varphi(\pi(a^{-1})\pi(x) v) = (\pi^*(a) \varphi)(\pi(x)v) = f_{v\otimes (\pi^*(a)\varphi)}(x).
\end{equation}

We note that the Hilbert sum in \eqref{eq2:PeterWeyl} is restricted to the set \emph{spherical representations} associated to $(G,K)$, namely,
\begin{equation}
\widehat G_K:=\{(\pi,V_\pi)\in\widehat G:V_\pi^K\neq0\}. 
\end{equation}

By the orthogonal relations (see for instance \cite[Thm.~1.4]{Takeuchi}), it follows that 
\begin{equation}\label{eq2:basisL^2(G)}
\bigcup_{\pi\in\widehat G_K}\{f_{v_i\otimes \varphi_j}: 1\leq i\leq d_\pi^K,\; 1\leq j\leq d_\pi\}
\end{equation}
is an orthonormal basis of $L^2(G/K)$, where for each $\pi\in\widehat G_K$, 
\begin{itemize}
\item $d_\pi=\dim V_\pi=\dim V_\pi^*$,

\item $d_\pi^K=\dim V_\pi^K$,

\item $\{v_1,\dots,v_{d_\pi^K}\}$ is any orthonormal basis of $V_\pi^K$, and

\item $\{\varphi_1,\dots,\varphi_{d_\pi}\}$ is any orthonormal basis of $V_\pi^*$.
\end{itemize}

For each $\pi\in \widehat G_K$ non-trivial, we take an orthonormal eigenbasis $\{v_1,\dots,v_{d_\pi^K}\}$ of $\pi(-C_\Phi)|_{V_\pi^K}$, i.e.\ 
$ \pi(-C_\Phi)v_i=\lambda_i^{\pi,\Phi} \, v_i$ 
for some $\lambda_i^{\pi,\Phi}>0$. 
We thus obtain that the basis of $L^2(G/K)$ in \eqref{eq2:basisL^2(G)} contains only eigenfunctions of $\Delta_\Phi$.
Hence,  
\begin{equation}\label{eq2:spec_A}
\Spec(G/K,g_\Phi):=
\Spec (\Delta_\Phi) = \bigcup_{\pi\in\widehat G_K} \big\{\!\big\{
\underbrace{\lambda_i^{\pi,\Phi},\dots, \lambda_i^{\pi,\Phi}}_{d_\pi\text{-times}}:1\leq i\leq d_\pi^K 
\big\}\!\big\}.
\end{equation}
(Here, the double curly brackets is to emphasize that the spectrum is a multiset and not a set.)
The multiplicity $d_\pi$ for each $\lambda_i^{\pi,\Phi}$ above comes from the following fact: $f_{v_i\otimes \varphi_j}$ is an eigenfunction of $\Delta_\Phi$ with eigenvalue $\lambda_i^{\pi,\Phi}$ for every $1\leq j\leq d_\pi$. 

For $T:W\to W$ a linear transformation of a finite-dimensional complex vector space $W$, we denote by $\lambda_{\min}(T)$ its smallest eigenvalue. 
The expression \eqref{eq2:spec_A} yields 
\begin{equation}\label{eq2:lambda1(G,g_A)}
\lambda_1(G/K,g_\Phi) = \min \left\{ \lambda_{\min}(\pi(-C_\Phi)|_{V_\pi^K}): \pi\in\widehat G_K,\, \pi\not\simeq 1_G \right\}. 
\end{equation}

\begin{remark}\label{rem2:C_I}
The case $\Phi=\Id_\fp$ is very particular since $\Cas_\fg:=C_{\Id_\fp} + \Cas_\fk$ lies in the center of $\mathcal U(\mathfrak g)$, where $\Cas_\fk= \sum_{i=1}^{\dim \fk}X_i^2$ for any orthonormal basis $\{X_i\}$ of $\fk$ with respect to $\innerdots_0$. 
(For instance, when $\mathfrak g$ is semisimple and $\innerdots_0$ is minus the Killing form, then $\Cas_\fg$ is the \emph{Casimir element} of $\fg$.)
Thus, for any $\pi\in\widehat G_K$, $\pi(-\Cas_\fg)$ commutes with $\pi(a)$ for every $a\in G$, and then Schur's Lemma yields that $\pi(-\Cas_\fg)$ acts by an scalar on $V_\pi$.
By denoting this scalar by $\lambda^\pi$, i.e.\ $\pi(-\Cas_\fg) =  \lambda^\pi\, \Id_{V_\pi}$, since $\pi(\Cas_\fk)v=0$ for all $v\in V_\pi^K$ by \eqref{eq2:pi(C_k)v=0}, we have that 
\begin{equation}\label{eq:spec_I}
\Spec(G/K,g_{\Id_{\fp}})=\Spec (\Delta_{\Id_{\fp}}) = \bigcup_{\pi\in\widehat G_K} \big\{\!\big\{
\underbrace{\lambda^{\pi},\dots, \lambda^{\pi}}_{d_\pi^K\times d_\pi\text{-times}} 
\big\}\!\big\}.
\end{equation}
\end{remark}

\subsection{Diameter of left-invariant non-Riemannian structures} \label{subsec:diam-non-Riemannian}

A \emph{sub-Riemannian manifold} is a triple $(M,\mathcal D,g)$, where $\mathcal D$ is a subbundle of $TM$ and $g=(g_p)_{p\in M}$ denotes a family of inner product on $\mathcal D$ which smoothly vary with the base point
(see \cite{Montgomery-tour} for a general reference).
A smooth curve $\gamma$ on $(M,\mathcal D,g)$ is called \emph{horizontal} if $\gamma'(t)\in \mathcal D_{\gamma(t)}$ for all $t$. 
The length of a horizontal curve $\gamma:[a,b]\to M$ is equal to $\op{lenght}_{(M,\mathcal D,g)}(\gamma):=\int_a^b g_{\gamma(t)}({\gamma'(t)},{\gamma'(t)})^{1/2} \,dt$. 
The sub-distance between two points $p,q\in M$ is defined as the infimum of $\op{lenght}_{(M,\mathcal D,g)}(\gamma)$ over all horizontal curves $\gamma$ on $M$ connecting $p$ and $q$. 
The corresponding diameter, $\diam(M,\mathcal D,g)$, is given by the supremum of the distances between two points in $M$. 
Consequently, if $M$ is compact, the diameter is $\infty$ if two points in $M$ cannot be joined by a horizontal smooth curve.

The following lemma is clear. 

\begin{lemma}\label{lem2:sub-diam-restriccionRiemanniano}
Let $\mathcal D$ be a subbundle on $M$. 
If $g$ is a Riemannian metric on $M$, then the sub-Riemannian metric $h$ on $(M,\mathcal D)$ given by the restriction of $g$ on $\mathcal D$ (i.e.\ $h_p=g_p|_{\mathcal D_p}$ for all $p\in M$) satisfies 
\begin{equation*}
\diam(M,g)\leq \diam(M,\mathcal D,h). 
\end{equation*}
\end{lemma}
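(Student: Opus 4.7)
The plan is to compare the two distance functions pointwise and then take suprema. The key observation is that every horizontal curve for $(M,\mathcal{D},h)$ is, in particular, a smooth curve in $M$, and its sub-Riemannian length agrees with its Riemannian length because $h_p$ is literally the restriction of $g_p$ to $\mathcal{D}_p$. Thus the sub-Riemannian distance is an infimum over a subclass of admissible curves compared with the Riemannian distance, which forces it to be at least as large.

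More precisely, I would fix $p,q\in M$ and show that $\dist_{(M,g)}(p,q)\leq \dist_{(M,\mathcal{D},h)}(p,q)$. If no horizontal curve joins $p$ to $q$, then the right-hand side is $+\infty$ by convention and the inequality is trivial. Otherwise, for any horizontal curve $\gamma:[a,b]\to M$ from $p$ to $q$ one has $\gamma'(t)\in \mathcal{D}_{\gamma(t)}$, so
\begin{equation*}
\op{lenght}_{(M,g)}(\gamma)=\int_a^b g_{\gamma(t)}(\gamma'(t),\gamma'(t))^{1/2}\,dt=\int_a^b h_{\gamma(t)}(\gamma'(t),\gamma'(t))^{1/2}\,dt=\op{lenght}_{(M,\mathcal{D},h)}(\gamma).
\end{equation*}
In particular $\dist_{(M,g)}(p,q)\leq \op{lenght}_{(M,\mathcal{D},h)}(\gamma)$, and taking the infimum over all such horizontal $\gamma$ gives $\dist_{(M,g)}(p,q)\leq \dist_{(M,\mathcal{D},h)}(p,q)$.

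Taking supremum over $p,q\in M$ on both sides yields $\diam(M,g)\leq \diam(M,\mathcal{D},h)$, as desired. The only subtle point is the definition of Riemannian distance, which is ordinarily taken as an infimum over piecewise smooth curves rather than smooth ones; this is not an obstacle because either the same convention is used for both distances, or one invokes the standard fact that piecewise smooth curves can be approximated in length by smooth ones, so that the Riemannian infimum is bounded above by the infimum over smooth curves (of which horizontal curves form a subfamily). There is no genuine hard step here; the argument is essentially a tautology arising from the compatibility $h=g|_{\mathcal{D}}$.
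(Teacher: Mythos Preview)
Your argument is correct and is precisely the tautological reasoning the paper has in mind: the paper does not give a proof at all, merely stating that ``the following lemma is clear.'' Your write-up simply unpacks that clarity.
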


We say that a subbundle $\mathcal D$ satisfies the \emph{bracket-generating condition} (also known as the \emph{Hörmander condition}) if the Lie algebra generated by vector fields in $\mathcal D$ spans at every point the tangent space of $M$. 
For such a $\mathcal D$, provided $M$ is compact, the Chow--Rashevskii Theorem ensures that $\diam (M,\mathcal D,h)<\infty$, for any sub-metric $h$.
In particular, any two points in $M$ can be joined by a horizontal curve.

In what follows we concentrate on $G$-invariant sub-Riemannian structures on $G/K$. 
The analogous treatment when $K=\{e\}$ can be found in \cite[Subsec.~2.3]{Lauret-EGSconj}. 
Given any $\Ad(K)$-invariant subspace $\HH$ of $\fp$ and any $\Ad(K)$-invariant inner product $b(\cdot,\cdot)$ on $\HH$, we associate the $G$-invariant sub-Riemannian structure $(\mathcal D,g)$ on $G/K$ given by 
\begin{align}\label{eq2:left-inv-sub}
\mathcal D&=\bigcup_{a\in G}dL_a(\HH), &
	g_{aK}\big(dL_a(X),dL_a(Y)\big) &= b({X},{Y}) ,
\end{align}
for all $X,Y\in\HH$ and $a\in G$. 
Here, $L_a:G/K\to G/K$ is given by $L_a(x)=axK$ and $\HH$ is seen as a subspace of $T_{eK} G/K\equiv \fp$. 
We next see that $\mathcal D$ and $g_{aK}$ are well defined. 
Suppose that $aK=bK$, thus $b=ac$ for some $c\in K$. 
We have that $L_{b}=L_{ac}:G/K\to G/K$ is given by $xK\mapsto bxK=acxK=a(cxc^{-1})K$, thus $dL_{b}(X)=dL_{ac}(X)= dL_{a}(\Ad(c)\cdot X)$ for all $X\in\fp$. 
Since $\HH$ is $\Ad(K)$-invariant, $dL_{a}(\HH)=dL_{b}(\HH)$ and consequently, $\mathcal D$ is well defined. 
Similarly, since $b(\cdot,\cdot)$ is $\Ad(K)$-invariant, we have for any $X,Y\in\HH$ that
\begin{align*}
g_{bK} \big(dL_b(X),dL_b(Y)\big) 
&=g_{bK}\big(dL_{ac}(X),dL_{ac}(Y)\big) \\
&=g_{bK}\big(dL_{a} (\Ad(c)\cdot X),dL_{a} (\Ad(c)\cdot Y)\big) \\
&= b(\Ad(c)\cdot {X},\Ad(c)\cdot {Y})=b(X,Y) \\
&= g_{aK}\big(dL_{a}(X),dL_{a}(Y)\big), 
\end{align*}
which shows that $g$ is also well defined. 

We will denote the corresponding sub-Riemannian manifold by $(G/K,\HH,g)$ and, as in the Riemannian case, $g$ will be identified with the inner product $g_{eK}=b$ on $\HH$. 

It is a simple matter to check that the map $\HH\mapsto \fk\oplus \HH$ gives a correspondence between $\Ad(K)$-invariant subspaces of $\fp$ and $\Ad(K)$-invariant subspaces in $\fg$ that contains $\fk$.

\begin{definition}\label{def2:bracket-generating}
An $\Ad(K)$-invariant subset $S$ of $\mathfrak \fp$ is called \emph{bracket generating} if the smallest Lie subalgebra of $\fg$ containing $\fk\cup S$ is $\fg$. 
\end{definition}

\begin{example}
Suppose that $G/K$ is isotropy irreducible, for instance, any irreducible symmetric space. 
Since $\{\Ad(a)(X):a\in K\}=\fp$ for every non-zero vector $X\in \fp$, $\fp$ is the only non-trivial $\Ad(K)$-invariant subspace of $\fp$. 
We conclude that every $G$-invariant sub-Riemannian structures on $G/K$ is Riemannian. 
\end{example}

\begin{example}
Suppose that $K=\{e\}$. 
We have that a subset $S$ of $\fg$ is bracket generating if and only if the subspace of $\fg$ $\R$-spanned by the elements $X_1$, $[X_1,X_2]$, $[X_1,[X_2,X_3]]$, and so on, for $X_i\in S$, $i=1,2,\dots$, is precisely $\fg$. 
\end{example}

The next theorem follows immediately from the Chow--Rashevskii Theorem. 
Since we will encounter the situation of the theorem many times in the course of this paper, we state it here.

\begin{theorem}\label{thm2:Chow}
Let $\HH$ be an $\Ad(K)$-invariant subspace of $\fp$.
If $\HH$ is bracket-generating, then $\diam(G/K,\HH, g)<\infty$ for any $\Ad(K)$-invariant inner product $g$ on $\HH$. 
\end{theorem}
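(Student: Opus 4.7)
The plan is to reduce the statement to the Chow--Rashevskii theorem applied to a bracket-generating left-invariant distribution on the compact Lie group $G$, and then push horizontal curves down to $G/K$ via the quotient map $\pi\colon G\to G/K$.

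First I would introduce the left-invariant distribution $\widetilde{\mathcal D}$ on $G$ with fiber $\fk\oplus\HH$ at $e$; that is, $\widetilde{\mathcal D}_a$ is the image of $\fk\oplus\HH$ under the differential at $e$ of left translation $x\mapsto ax$ on $G$. Because the bracket of two left-invariant vector fields on $G$ is itself left-invariant and matches the Lie bracket of $\fg$ at $e$, iterated brackets of sections of $\widetilde{\mathcal D}$ evaluated at any $a\in G$ span the left translate of the Lie subalgebra of $\fg$ generated by $\fk\cup\HH$. By Definition~\ref{def2:bracket-generating}, this subalgebra is all of $\fg$, so $\widetilde{\mathcal D}$ is bracket-generating on $G$. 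Equipping $\widetilde{\mathcal D}$ with a left-invariant sub-Riemannian metric $\widetilde g$ whose value at $e$ is the orthogonal direct sum of $\innerdots_0|_{\fk\times\fk}$ and $g$, the Chow--Rashevskii theorem on the compact connected manifold $G$ yields $\diam(G,\widetilde{\mathcal D},\widetilde g)<\infty$.

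Next I would check that $\pi$ carries $\widetilde{\mathcal D}$-horizontal curves to $\mathcal D$-horizontal curves of no greater length, where $\mathcal D$ is the $G$-invariant distribution on $G/K$ associated with $\HH$ as in \eqref{eq2:left-inv-sub}. For a horizontal curve $\widetilde\gamma\colon[0,1]\to G$, write $\widetilde\gamma'(t)$ as the left translate by $\widetilde\gamma(t)$ of a vector $Z(t)+Y(t)\in\fk\oplus\HH$; the compatibility $\pi\circ(x\mapsto ax)=L_a\circ\pi$ together with the vanishing of $d\pi$ on $\fk$ imply $(\pi\circ\widetilde\gamma)'(t)=dL_{\widetilde\gamma(t)}(Y(t))\in\mathcal D_{\pi(\widetilde\gamma(t))}$ and $\|(\pi\circ\widetilde\gamma)'(t)\|_g=\|Y(t)\|_g\leq\|\widetilde\gamma'(t)\|_{\widetilde g}$. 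Given $aK,bK\in G/K$, lifting to $a,b\in G$, joining them by a $\widetilde{\mathcal D}$-horizontal curve of finite length, and projecting yields a $\mathcal D$-horizontal curve between $aK$ and $bK$ of at most that length. Hence $\diam(G/K,\HH,g)\le\diam(G,\widetilde{\mathcal D},\widetilde g)<\infty$.

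No step is a serious obstacle: the bracket-generating property of $\widetilde{\mathcal D}$ is immediate from the identity $[\widetilde X,\widetilde Y]=\widetilde{[X,Y]}$ for left-invariant vector fields, and the sub-Riemannian non-expansiveness of $\pi$ is guaranteed by the orthogonal direct sum choice of $\widetilde g_e$.
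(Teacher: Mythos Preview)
Your proposal is correct. The paper gives no proof beyond the sentence ``follows immediately from the Chow--Rashevskii Theorem''; your lift-to-$G$-and-project argument is a clean and standard way to supply those details, and in particular it sidesteps the need to verify the H\"ormander condition directly for the distribution $\mathcal D$ on $G/K$.
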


For a general treatment of sub-Riemannian geometry we refer the reader to \cite{AgrachevBarilariBoscain-book}, \cite{LeDonne-lecturenotes}, and \cite{Montgomery-tour}. 
In the present article we will only use the few facts just reviewed.

\bigskip

We now introduce the second non-Riemannian structure. 
Given $g=(g_p)_{p\in M}$ such that $g_p$ is a positive semi-definite symmetric bilinear form on $T_pM$ at each point $p\in M$ varying smoothly, $(M,g)$ is called a \emph{singular Riemannian manifold}. 
See \cite{Kupeli} for the general theory on a more general context: \emph{singular pseudo-Riemannian manifolds} (i.e.\ $g_p$ is any symmetric bilinear form on $T_pM$).

The length of a smooth curve $\gamma:[a,b]\to M$ is equal to $\int_a^b g_{\gamma(t)}({\gamma'(t)},{\gamma'(t)})^{1/2} \,dt$. 
The \emph{singular distance} between two points $p,q\in M$ is defined as the infimum of the lengths over all smooth curves $\gamma$ on $M$ connecting $p$ and $q$. 
The corresponding diameter, $\diam(M,g)$, is given by the supremum of the distances between two points in $M$. 

\begin{remark}\label{rem2:pseudo-distance}
The corresponding singular distance $\dist_{(M,g)}$ of $(M,g)$ is a pseudo-distance in the sense of \cite[Def.~1.1.4]{BuragoBuragoIvanov-book}, that is, it satisfies all the properties of a distance except the requirement that $\dist_{(M,g)}(p,q)=0$ implies $p=q$. 
Moreover, the singular diameter of a non-trivial singular Riemannian manifold might be zero (see \cite[Ex.~2.21]{Lauret-EGSconj}). 
By identifying points in $M$ with zero distance in the pseudo-metric space $(M,\dist_{(M,g)})$, we obtain a metric space that we denote by $(M/\dist_{(M,g)} ,\hat \dist_{(M,g)})$ (see for instance \cite[Prop.~1.1.5]{BuragoBuragoIvanov-book}).
\end{remark}

\begin{notation}\label{not2:symmetricbilinearform}
Given any (real) symmetric bilinear form $b$ on $\fg$ and any subspace $\fa$ of $\fg$, let us denote by $b|_{\fa}$ the restriction of $b$ on $\fa$, that is, $b|_{\fa}(X,Y)= b(X,Y)$ for all $X,Y\in\fa$. 
Furthermore, when $b$ is non-degenerate, let $b|_{\fa}^*$ denote the symmetric bilinear form on $\fg$ given by $b|_{\fa}^*(X_1+X_2,Y_1+Y_2)=b(X_1,Y_1)$ for all $X_1,Y_1\in\fa$ and $X_2,Y_2\in\fa^{\perp_b}:=\{X\in\fa:b(X,Y)=0\text{ for all }Y\in\fa\}$. 
Note that if $b$ is positive definite, then $b|_{\fa}$ is positive definite and $b|_{\fa}^*$ is positive semi-definite. 
\end{notation}

The next result is analogous to Lemma~\ref{lem2:sub-diam-restriccionRiemanniano}. 
The proof is again straightforward.

\begin{lemma}\label{lem2:sing-diam-restriccionRiemanniano}
Let $\mathcal D$ be a subbundle on $M$. 
If $g$ is a Riemannian metric on $M$, the singular  Riemannian metric $h$ given by $h_p=g_p|_{\mathcal D_p}^*$ for all $p\in M$ satisfies 
\begin{equation*}
	\diam(M,g)\geq \diam(M,h). 
\end{equation*}
\end{lemma}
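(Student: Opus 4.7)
The plan is to reduce the diameter inequality to a pointwise inequality of bilinear forms and then propagate that through lengths of curves. The core observation is that, by the very definition of $g|_{\mathcal D_p}^{*}$ in Notation~\ref{not2:symmetricbilinearform}, if one decomposes any tangent vector $v\in T_pM$ as $v=v_1+v_2$ with $v_1\in\mathcal D_p$ and $v_2\in\mathcal D_p^{\perp_g}$, then
\begin{equation*}
h_p(v,v) = g_p(v_1,v_1) \leq g_p(v_1,v_1)+g_p(v_2,v_2) = g_p(v,v),
\end{equation*}
so $h_p\leq g_p$ as symmetric bilinear forms on $T_pM$ at every point.

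Next I would integrate this pointwise inequality along curves. For any smooth curve $\gamma:[a,b]\to M$, taking square roots and integrating gives
\begin{equation*}
\int_a^b h_{\gamma(t)}(\gamma'(t),\gamma'(t))^{1/2}\,dt \leq \int_a^b g_{\gamma(t)}(\gamma'(t),\gamma'(t))^{1/2}\,dt,
\end{equation*}
i.e.\ the $h$-length of $\gamma$ is bounded above by its $g$-length.

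Finally I would take infima and suprema: for any $p,q\in M$, infimizing over smooth curves joining $p$ to $q$ yields $\dist_{(M,h)}(p,q)\leq \dist_{(M,g)}(p,q)$, and then supremizing over pairs $(p,q)$ yields $\diam(M,h)\leq \diam(M,g)$. There is no real obstacle here; the only subtlety worth flagging is that $h$ is only positive semi-definite, so distances may vanish between distinct points (as in Remark~\ref{rem2:pseudo-distance}), but this poses no problem for the inequality since the same class of admissible curves is used on both sides.
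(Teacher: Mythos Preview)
Your proof is correct and is exactly the straightforward argument the paper has in mind; the paper does not spell out a proof at all, remarking only that ``the proof is again straightforward,'' and your monotonicity argument ($h_p\leq g_p$ pointwise $\Rightarrow$ length inequality $\Rightarrow$ distance inequality $\Rightarrow$ diameter inequality) is precisely what is intended.
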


We next focus on \emph{$G$-invariant} singular Riemannian structures on a homogeneous space $G/K$. 
Let $b$ be an $\Ad(K)$-invariant positive semi-definite symmetric bilinear form on $\fp$. 
We associate to $b$ the singular Riemannian metric $g$ on $G$ given by
\begin{align}
g_a\big(dL_a(X),dL_a(Y)\big) &= b(X,Y) ,
\end{align}
for all $X,Y\in T_{eK}G/K\equiv \fp$ and $a\in G$. 
One can see that the singular metric $g$ is well defined because $b(\cdot,\cdot)$ is $\Ad(K)$-invariant. 
Similarly as above, we will identify $g$ with the symmetric bilinear form $g_e=b$ on $\fp$. 

\begin{remark}\label{rem2:radical}
The \emph{radical} of a symmetric bilinear form $b$ on $\fp$ is given by 
$$
\op{rad}(b):=\{X\in \fp: b(X,Y)=0\text{ for all }Y\in\fp\}.
$$
If $b$ is non-trivial and $\fa$ is any complement of $\op{rad}(b)$ in $\fp$, then $b|_{\fa}$ is non-degenerate.
\end{remark}

\begin{lemma}\label{lem2:diam(G/d)=diam(G/H)}
Let $H$ be a closed subgroup of $G$ containing $K$ and let $\fq$ denote the orthogonal complement of $\fh$ in $\fg$ with respect to $\innerdots_0$ (so $\fk\subset\fh$ and $\fp\supset\fq$). 
Let $h$ be an $\Ad(H)$-invariant positive semi-definite symmetric bilinear form on $\fp$ with $\op{rad}(h)=\fh\cap \fp$. 
Then, the metric space $(G/\dist_{(G/K,h)}, \hat\dist_{(G/K,h)})$ is isometric (as metric spaces) to the metric space corresponding to the homogeneous Riemannian manifold $(G/H,h|_{\fq})$.
In particular, 
\begin{equation*}
\diam(G/K,h) = \diam(G/H,h|_{\fq}). 
\end{equation*}
\end{lemma}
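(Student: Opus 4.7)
My plan is to prove the stronger distance identity
\[
\dist_{(G/K,h)}(aK, bK) \;=\; \dist_{(G/H, h|_\fq)}(aH, bH)
\qquad\text{for all } a,b\in G,
\]
from which the description of the quotient pseudo-metric space and the diameter equality both follow at once. The key observation is that $\fp$ splits $\innerdots_0$-orthogonally as $(\fh\cap\fp)\oplus\fq$, and the hypothesis $\op{rad}(h)=\fh\cap\fp$ yields $h(Y,Y)=h|_\fq(Y_\fq,Y_\fq)$ for every $Y\in\fp$, where $Y_\fq$ is its $\fq$-component. Moreover, since $\innerdots_0$ is $\Ad(G)$-invariant and $\fh$ is $\Ad(H)$-invariant, so is $\fq$; consequently the $G$-invariant distribution $\mathcal H\subset T(G/K)$ defined by $\mathcal H_{aK}:=dL_a(\fq)$ is well defined, and the differential of the natural projection $\pi\colon G/K\to G/H$ restricts at every point to an $h$-isometric linear isomorphism $d\pi_{aK}\colon\mathcal H_{aK}\to T_{aH}(G/H)$.

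For the direction $\dist_{(G/K,h)}(aK,bK)\geq \dist_{(G/H,h|_\fq)}(aH,bH)$, I would take any piecewise smooth curve $\gamma\colon[0,1]\to G/K$ joining $aK$ and $bK$, set $\tilde\gamma:=\pi\circ\gamma$, and verify that the two length integrands agree pointwise: writing $\gamma(t)=c(t)K$ and $\gamma'(t)=dL_{c(t)}Y(t)$ with $Y(t)\in\fp$, the projected velocity corresponds to $Y(t)_\fq\in\fq$, and the displayed identity on $h$ gives $h(Y(t),Y(t))=h|_\fq(Y(t)_\fq,Y(t)_\fq)$. For the reverse inequality, given $\varepsilon>0$ I would pick a curve $\tilde\gamma$ in $G/H$ from $aH$ to $bH$ of length less than $\dist_{(G/H,h|_\fq)}(aH,bH)+\varepsilon$, horizontally lift it along $\mathcal H$ to a curve $\gamma$ on $G/K$ starting at $aK$ (with the same length, by the pointwise identification above), and close the gap between the endpoint $b'K\in\pi^{-1}(bH)$ and $bK$ by a curve of $h$-length zero inside the fiber. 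The latter is constructed by writing $b'=bc$ with $c\in H$, factoring $c=\exp(X_1)\cdots\exp(X_m)$ with $X_i\in\fh$, and concatenating the segments $t\mapsto b\exp(X_1)\cdots\exp(tX_i)K$: the velocity of each segment pulls back under $dL_\bullet$ to the $\fp$-component of $X_i$, which lies in $\fh\cap\fp=\op{rad}(h)$ and contributes zero to the length integral.

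The main technical obstacle appears in this last step when $H$ is disconnected, since the factorization $c=\exp(X_1)\cdots\exp(X_m)$ requires $c$ to lie in the identity component of $H$; for a general $c\in H$ one reduces to this case by exploiting that $G$ is connected and $h$ is $G$-invariant, so the pseudo-distance between points of the same fiber is independent of the base point and vanishes as soon as it vanishes on the identity component of the fiber. Once the distance identity is in hand, the zero-locus of $\dist_{(G/K,h)}$ coincides with the fibers of $\pi$ (because $h|_\fq$ is a genuine Riemannian metric on $G/H$ and its distance separates points), so the quotient $(G/\dist_{(G/K,h)},\hat\dist_{(G/K,h)})$ is canonically identified with $(G/H,h|_\fq)$ as metric spaces, and the diameter equality follows.
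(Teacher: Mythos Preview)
The paper leaves this proof to the reader, referring to \cite{Lauret-EGSconj} for the case $K=\{e\}$. Your outline---establish the pointwise identity $\dist_{(G/K,h)}(aK,bK)=\dist_{(G/H,h|_\fq)}(aH,bH)$ by projecting curves down through the submersion $\pi\colon G/K\to G/H$ and horizontally lifting them back up along $\mathcal H$, then closing the fiber gap with a zero-length path---is the natural argument, and it is correct when $H$ is connected.

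The gap is the disconnected case. Your sentence ``the pseudo-distance between points of the same fiber \dots\ vanishes as soon as it vanishes on the identity component of the fiber'' is not an argument: left-invariance gives $\dist(eK,cK)=\dist(aK,acK)$, but nothing forces $\dist(eK,cK)=0$ when $c$ lies outside the identity component of $H$. In fact the lemma as stated is \emph{false} in that generality. Take $G=\SU(2)$, $K=\{e\}$, $H=N_G(T)$ the normalizer of a maximal torus, and $h=\innerdots_0|_{\fq}^*$ (Notation~\ref{not2:symmetricbilinearform}). Then $h$ is $\Ad(H)$-invariant with $\op{rad}(h)=\ft=\fh\cap\fp$, yet the quotient pseudo-metric space is $G/T\simeq S^2$ (curves of zero $h$-length stay in a single $T$-coset), whereas $G/H\simeq P^2(\R)$, and their diameters differ by a factor of two. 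Concretely, a path from $e$ to any $w\in H\smallsetminus T$ projects to a non-contractible loop in $P^2(\R)$, whose length is bounded below by the systole. This does not affect the paper: every invocation of the lemma (Proposition~\ref{prop2:diam-pseudo-Hclosed}, and Lemma~\ref{lem-comp:diam} under the hypotheses of Theorem~\ref{thm1:comparison}) has $H$ connected, and for that case your proof is complete.
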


The proof is left to the reader. 
See \cite[Lem.~2.20]{Lauret-EGSconj} for further details when $K$ is trivial.

\begin{proposition}\label{prop2:diam-pseudo-Hclosed}
Let $h$ be an $\Ad(K)$-invariant positive semi-definite symmetric bilinear form on $\fp$. 
Assume there is a proper subalgebra $\fh$ of $\fg$ containing $\fk\oplus \op{rad}(h)$ such that the associated connected subgroup $H$ of $G$ is closed in $G$. 
Then $\diam(G/K,h) >0$.
\end{proposition}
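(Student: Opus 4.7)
The plan is to reduce to the Riemannian diameter of the non-trivial compact quotient $G/H$. Since $\fh\subsetneq\fg$ and $H$ is closed by hypothesis, $G/H$ is a compact connected manifold of positive dimension, so every $G$-invariant Riemannian metric on it has strictly positive diameter. I will construct a positive semi-definite $\Ad(K)$-invariant form $h'$ on $\fp$ with $h'\leq h$ pointwise, whose radical equals $\fh\cap\fp$, that pulls back a genuine $\Ad(H)$-invariant Riemannian metric from $G/H$. Then $h'\leq h$ forces $\diam(G/K,h)\geq\diam(G/K,h')$, and Lemma~\ref{lem2:diam(G/d)=diam(G/H)} identifies the latter with the strictly positive Riemannian diameter of that metric on $G/H$.

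Let $\fq$ denote the $\innerdots_0$-orthogonal complement of $\fh$ in $\fg$; since $\fk\subset\fh$ we have $\fq\subset\fp$, and $\fq$ is $\Ad(H)$-invariant. Write $\fp=(\fh\cap\fp)\oplus\fq$ orthogonally and consider the quadratic form
\begin{equation*}
q\colon\fq\to\R,\qquad q(X_2):=\inf_{X_1\in\fh\cap\fp}h(X_1+X_2,\,X_1+X_2).
\end{equation*}
Since $h\geq 0$, the function being minimized is a non-negative quadratic in $X_1$, so the infimum is finite and attained (otherwise the linear term would render it unbounded below); hence $q$ is a well-defined positive semi-definite quadratic form on $\fq$. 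The key claim is that $q$ is positive definite. Indeed, if $q(X_2)=0$ then the infimum is attained at some $X_1\in\fh\cap\fp$ with $h(X_1+X_2,X_1+X_2)=0$; Cauchy--Schwarz for the positive semi-definite form $h$ then places $X_1+X_2$ in $\op{rad}(h)\subset\fh\cap\fp$, forcing $X_2\in\fq\cap(\fh\cap\fp)=\{0\}$.

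Fix any $\Ad(H)$-invariant inner product $\beta$ on $\fq$ (e.g.\ $\innerdots_0|_\fq$) and use finite-dimensional compactness to choose $c>0$ with $c\beta\leq q$ on $\fq$. Define
\begin{equation*}
h'(X,Y):=c\,\beta(\pi_\fq(X),\pi_\fq(Y))\qquad\text{for }X,Y\in\fp,
\end{equation*}
where $\pi_\fq\colon\fp\to\fq$ is the $\innerdots_0$-orthogonal projection. Then $h'$ is $\Ad(K)$-invariant, $\op{rad}(h')=\fh\cap\fp$, $h'$ factors through the $\Ad(H)$-invariant inner product $c\beta$ on $\fq$, and writing $X=X_1+X_2\in(\fh\cap\fp)\oplus\fq$ one has $h'(X,X)=c\beta(X_2,X_2)\leq q(X_2)\leq h(X,X)$, so $h'\leq h$ pointwise. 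The latter yields $\op{length}_{h'}(\gamma)\leq\op{length}_h(\gamma)$ for every smooth curve $\gamma$, hence $\diam(G/K,h)\geq\diam(G/K,h')$. Applying Lemma~\ref{lem2:diam(G/d)=diam(G/H)} to $h'$ gives $\diam(G/K,h')=\diam(G/H,c\beta)>0$, the Riemannian diameter of a non-trivial compact connected homogeneous space.

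The main technical point is the positive definiteness of $q$, combining $h\geq 0$ with $\op{rad}(h)\subset\fh\cap\fp$; the remaining steps are routine applications of Lemma~\ref{lem2:diam(G/d)=diam(G/H)} together with the standard fact that a positive-dimensional compact connected Riemannian manifold has positive diameter.
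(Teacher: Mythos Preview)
Your proof is correct and follows the same route as the paper: bound $h$ below by a scalar multiple of $g_0|_{\fq}^*$ on $\fp$ and then invoke Lemma~\ref{lem2:diam(G/d)=diam(G/H)} to identify the resulting singular diameter with the positive Riemannian diameter of $G/H$. The paper simply asserts that a sufficiently small $t>0$ yields $h\geq t\,g_0|_\fq^*$ and appeals to monotonicity, whereas you justify this pointwise inequality on all of $\fp$ (not just on $\fq$) via the Schur-complement form $q(X_2)=\inf_{X_1\in\fh\cap\fp}h(X_1+X_2,X_1+X_2)$; this is a welcome bit of extra care, since the inequality can fail for $X\notin\fq$ if $t$ is chosen only from the restriction $h|_\fq$.
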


\begin{proof}
Let $\fq$ be the orthogonal complement subspace of $\fh$ in $\fg$ with respect to $\innerdots_0$, thus $\fq\subset \fp$. 
There is $t>0$ sufficiently small such that $h(X,X)\geq t\, g_0(X,X)$ for all $X\in \fq$. 
We have that $\diam(G/K,h)\geq \diam(G/K,t\,g_0|_{\fq}^*)$ by monotonicity (see e.g.\ \cite[2.17]{Lauret-EGSconj}). 
Now, Lemma~\ref{lem2:diam(G/d)=diam(G/H)} yields $\diam(G/K, t\,g_0|_{\fq}^*)= \diam(G/H,(t\,g_0|_{\fq}^*)|_{\fq} ) = \diam(G/H,t\,g_0|_{\fq})$, which is clearly positive, and the proof is complete. 
\end{proof}

\begin{remark}
The hypothesis of the existence of the subalgebra $\fh$ in  Proposition~\ref{prop2:diam-pseudo-Hclosed} cannot be replaced by assuming that $\op{rad}(h)$ is not bracket generating, unless $G$ is assumed semisimple. 
See \cite[Rem.~2.23]{Lauret-EGSconj} for an explicit counterexample. 
\end{remark}

\subsection{Spectra of left-invariant non-Riemannian structures}
\label{subsec:spec-non-Riemannian}
An element $X\in \fp$ induces a vector field on $G/K$ given as follows: for $f\in C^\infty(G/K)$ and $x\in G$, 
\begin{equation}
(X\cdot f)(xK) = \left.\frac{d}{dt}\right|_{0} f(x\exp(tX)K).
\end{equation}
Note that $X$ commutes with the left-regular representation of $G$. 

Let $\HH$ be an $\Ad(K)$-invariant subspace of $\fp$ and $h$ an $\Ad(K)$-invariant inner product on it. 
The \emph{sub-Laplace operator} (or \emph{sub-Laplacian}) associated to the sub-Riemannian manifold $(G/K,\HH,h)$ (introduced in \eqref{eq2:left-inv-sub}) is the (positive semi-definite self-adjoint) differential operator on $C^\infty(G/K)$ given by 
\begin{equation}
\Delta_{(\HH,h)}(f)= -\sum_{j=1}^l Y_j^2\cdot f,
\end{equation}
where $\{Y_1,\dots,Y_l\}$ is any orthonormal basis of $\HH$ with respect to the inner product $h$.
We set $C_{(\HH,h)}=\sum_{j=1}^l Y_j^2 \in\mathcal U(\fg)$.
For $\pi\in\widehat G$, $v\in V_\pi^K$, $\varphi\in V_\pi^*$, and $f_{v\otimes\varphi}\in C^\infty(G)$ given as in \eqref{eq2:PeterWeyl-embedding}, one has that 
\begin{equation}\label{eq2:L-f_vxvarphi}
\Delta_{(\HH,h)} \cdot f_{v\otimes \varphi} = f_{(-\pi(C_{(\HH,h)}) v)\otimes \varphi}. 
\end{equation}

Clearly, the trivial irreducible representation $1_G$ of $G$ contributes to $\Spec(\Delta_{(\HH,h)})$ with the eigenvalue $0$ (the smallest one) with multiplicity one. 
By proceeding in the same way as for \eqref{eq2:lambda1(G,g_A)}, one gets that the second (possible zero) smallest eigenvalue of $\Delta_{(\HH,h)}$ is given by 
\begin{equation}\label{eq2:lambda_1(G,H,g_I|H)}
\lambda_1(G/K,\HH,h) = \min \left\{ \lambda_{\min}(\pi(-C_{(\HH,h)})|_{V_\pi^K}): \pi\in\widehat G_K,\, \pi\not\simeq 1_G \right\}. 
\end{equation}

By Hörmander's theorem (\cite{Hormander67}), $\Delta_{(\HH,h)}$ is hypoelliptic when $\HH$ is bracket generating (i.e.\ the Lie algebra generated by $\fk\cup\HH$ is $\fg$, according to Definition~\ref{def2:bracket-generating}). 
In particular, $\Delta_{(\HH,h)}$ has a discrete spectrum since the inverse operator to $1+\Delta_{(\HH,h)}$ is compact.
We state this conclusion for future use. 

\begin{theorem}\label{thm2:Hormander-consequence}
If $\HH$ is an $\Ad(K)$-invariant bracket-generating subspace of $\fp$, $\lambda_1(G/K,\HH, g)<\infty$ for any $\Ad(K)$-invariant inner product $g$ on $\HH$. 
\end{theorem}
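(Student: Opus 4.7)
The statement is essentially a direct consequence of the discussion in the preceding paragraph, so my plan is simply to flesh out that one-sentence argument into a short proof with three ingredients: (i) confirming Hörmander's hypothesis for the horizontal vector fields on $G/K$; (ii) invoking Hörmander's theorem to get hypoellipticity of $\Delta_{(\HH,g)}$; and (iii) combining hypoellipticity with compactness of $G/K$ to conclude that the spectrum is discrete, which forces the second smallest eigenvalue $\lambda_1(G/K,\HH,g)$ to be finite.

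First, I would pick an $\innerdots_0$-orthonormal basis $\{Y_1,\dots,Y_l\}$ of $\HH$ and view each $Y_j$ as the left-invariant vector field on $G/K$ defined by $(Y_j\cdot f)(xK)=\tfrac{d}{dt}|_{t=0} f(x\exp(tY_j)K)$. By Definition~\ref{def2:bracket-generating}, the assumption that $\HH$ is bracket-generating says that the Lie subalgebra of $\fg$ generated by $\fk\cup\HH$ equals $\fg$. I would verify that this translates into Hörmander's condition for the sub-bundle $\mathcal D$ defined in \eqref{eq2:left-inv-sub}, namely that the vector fields $Y_1,\dots,Y_l$ together with their iterated Lie brackets span $T_{aK}(G/K)$ at every $aK\in G/K$. (The elements of $\fk$ do not produce vector fields that are non-zero at $eK$, but iterated brackets of the form $[\fk,\HH],[\fk,[\fk,\HH]],\dots$ give elements of $\fp$, so the bracket-closure of $\HH$ in $\fg$ modulo $\fk$ projects onto $\fp\simeq T_{eK}(G/K)$; left-invariance then propagates the condition to every point.)

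Second, once Hörmander's condition is in place, his theorem~\cite{Hormander67} gives that $\Delta_{(\HH,g)}=-\sum_j Y_j^2$ is hypoelliptic. Because $G/K$ is compact and $\Delta_{(\HH,g)}$ is symmetric and non-negative on $C^\infty(G/K)$, standard elliptic-regularity arguments show that the resolvent $(1+\Delta_{(\HH,g)})^{-1}$ is a compact self-adjoint operator on $L^2(G/K)$. The spectral theorem for compact self-adjoint operators then gives a discrete spectrum $0=\mu_0<\mu_1\leq \mu_2\leq\cdots\to \infty$ with $\mu_0$ of multiplicity one (from the constants, since $G/K$ is connected). Comparing with \eqref{eq2:lambda_1(G,H,g_I|H)}, this shows $\lambda_1(G/K,\HH,g)=\mu_1<\infty$, which is the desired conclusion.

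The only step that is not completely immediate is the translation in the first paragraph between the Lie-algebraic bracket-generating condition of Definition~\ref{def2:bracket-generating} and Hörmander's pointwise condition on the vector fields $Y_1,\dots,Y_l$; this is a routine exercise in homogeneous geometry rather than a genuine obstacle. Everything else is the standard functional-analytic package around hypoelliptic operators on compact manifolds, and is exactly the content of the paragraph just above the statement; the theorem is being recorded here only for ease of reference later in the paper.
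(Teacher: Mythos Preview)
Your proposal is correct and follows exactly the paper's approach: the paper's entire proof is the paragraph immediately preceding the statement (Hörmander's theorem gives hypoellipticity, hence $(1+\Delta_{(\HH,h)})^{-1}$ is compact, hence the spectrum is discrete), and you have faithfully expanded that sketch, including the one genuine check the paper leaves implicit---that Definition~\ref{def2:bracket-generating} translates into Hörmander's pointwise condition on $G/K$. Your observation that the argument in fact yields $\lambda_1>0$ (not merely $<\infty$) is also pertinent, since that is how the result is actually invoked later in Proposition~\ref{prop6:ordered-estimates}.
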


\section{Comparison}\label{sec:comparison}

In this short section we prove Theorem~\ref{thm1:comparison}. 
We use the assumptions and notations introduced in Remark~\ref{rem2:assumption+notation}.

\begin{lemma}\label{lem-comp:diam}
	Let $h$ be a $G$-invariant metric on $G/K$. 
	Then, for any left-invariant metric $g$ on $G$ satisfying that $g|_{\fp}=h$ and $g(\fp,\fk)=0$, we have that $\diam(G/K,h)\leq \diam(G,g)$. 
\end{lemma}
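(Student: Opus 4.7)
The plan is to exhibit the projection $\pi\colon (G,g)\to (G/K,h)$ as a length-non-increasing map, which then forces the diameter inequality immediately. I will not need the full formalism of a Riemannian submersion, only the pointwise estimate $|d\pi_a(v)|_h \le |v|_g$ for every $v\in T_aG$, together with the observation that any two lifts of points in $G/K$ are joined in $G$ by a curve of length at most $\diam(G,g)$.

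First I would trivialize everything by left-translation. Pick $a\in G$ and $v\in T_aG$, and write $v=dL_a(Z)$ with $Z\in\fg$. Decompose $Z=X+Y$ with $X\in\fk$ and $Y\in\fp$. Since $g$ is left-invariant and $g(\fp,\fk)=0$, the summands $dL_a(X)$ and $dL_a(Y)$ are $g$-orthogonal, so
\begin{equation*}
|v|_g^2 = g(X,X)+g(Y,Y)\ge g(Y,Y) = h(Y,Y),
\end{equation*}
the last equality by the hypothesis $g|_{\fp}=h$. On the other hand, $dL_a(X)$ is tangent to the fiber $aK$ and hence is killed by $d\pi_a$, while from $\pi\circ L_a=\tau_a\circ\pi$ we get $d\pi_a(dL_a(Y))=d\tau_a(d\pi_e(Y))$. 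Because $\tau_a$ is an $h$-isometry (here is where the $G$-invariance of $h$ enters) and $d\pi_e$ implements the standard identification $\fp\equiv T_{eK}(G/K)$, one concludes $|d\pi_a(v)|_h^2 = h(Y,Y)$. Combining, $|d\pi_a(v)|_h\le |v|_g$, as desired.

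It follows that for any piecewise smooth curve $\gamma\colon[0,1]\to G$, the projected curve $\pi\circ\gamma$ satisfies $\operatorname{length}_h(\pi\circ\gamma)\le \operatorname{length}_g(\gamma)$. Given $p,q\in G/K$, choose any lifts $\tilde p,\tilde q\in G$; then
\begin{equation*}
\dist_{G/K,h}(p,q)\le \inf_{\gamma}\operatorname{length}_h(\pi\circ\gamma)\le \inf_{\gamma}\operatorname{length}_g(\gamma)=\dist_{G,g}(\tilde p,\tilde q)\le \diam(G,g),
\end{equation*}
where the infimum is over smooth curves $\gamma$ in $G$ from $\tilde p$ to $\tilde q$. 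Taking the supremum over $p,q\in G/K$ yields $\diam(G/K,h)\le \diam(G,g)$.

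There is no real obstacle here: the only subtle point is to make sure the orthogonality $g(\fp,\fk)=0$ is used at every base point, which is immediate from the left-invariance of $g$. The $\Ad(K)$-invariance of $g|_{\fp}$ needed for the inner product on $\fp$ to descend to a well-defined $G$-invariant metric is automatic from the assumption $g|_{\fp}=h$.
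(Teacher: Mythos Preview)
Your proof is correct. The argument that $\pi\colon(G,g)\to(G/K,h)$ is $1$-Lipschitz via the pointwise inequality $|d\pi_a(v)|_h\le |v|_g$ is the standard Riemannian-submersion picture, and it goes through exactly as you wrote once you use the orthogonal splitting $\fg=\fk\oplus\fp$ and the hypotheses $g(\fp,\fk)=0$, $g|_\fp=h$.

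The paper takes a slightly different route. Instead of working directly with the projection, it passes through the \emph{singular} Riemannian metric $g|_{\fp}^*$ on $G$ (the positive semi-definite form obtained by keeping $g$ on $\fp$ and declaring it zero on $\fk$; see Notation~\ref{not2:symmetricbilinearform}). Lemma~\ref{lem2:sing-diam-restriccionRiemanniano} gives $\diam(G,g)\ge\diam(G,g|_{\fp}^*)$, and then Lemma~\ref{lem2:diam(G/d)=diam(G/H)} (applied with the base group trivial and $H=K$) identifies the metric space underlying $(G,g|_{\fp}^*)$ with $(G/K,h)$, so that $\diam(G,g|_{\fp}^*)=\diam(G/K,h)$. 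This is morally the same argument as yours---killing the $\fk$-direction and keeping the $\fp$-length---but it is phrased in the singular-metric language the paper has already set up for its main diameter estimates. Your version is more self-contained and makes the submersion geometry explicit; the paper's version is shorter because it reuses machinery already in place.
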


\begin{proof}
	Lemma~\ref{lem2:sing-diam-restriccionRiemanniano} gives $\diam(G,g) \geq \diam(G,g|_{\fp}^*)$ (see Notation~\ref{not2:symmetricbilinearform} for $g|_\fp^*$). 
	Since $g|_{\fp}^*= h|_{\fp}^*$, Lemma~\ref{lem2:diam(G/d)=diam(G/H)} (for $K=\{e\}$) tells us that $\diam(G/K,h) = \diam(G,h|_{\fp}^*) = \diam(G,g|_{\fp}^*)$, and the proof is complete. 
\end{proof}

\begin{lemma}\label{lem-comp:lambda1}
Assume that $K$ is connected. 
	Let $h$ be a $G$-invariant metric on $G/K$. 
	For $t>0$, we set $g_t=h|_{\fp}\oplus t^2g_0|_{\fk}$ in $\mathcal M(G):=\mathcal M(G,\{e\})$ (i.e.\ $g_t(X,Y)= h(X_\fp,Y_\fp)+ t^2g_0(X_\fk,Y_\fk)$ for all $X,Y\in \fg$, where the subindex $\fp$ and $\fk$ denotes the orhogonal projections to $\fp$ and $\fk$ respectively). 
	Then
	\begin{equation*}
		\lambda_1(G/K,h)= \lim_{t\to 0} \lambda_1(G, g_t). 
	\end{equation*}
\end{lemma}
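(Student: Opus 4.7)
The plan is to diagonalize the spectrum-defining operator $\pi(-C_{g_t})$ with respect to the $K$-invariant splitting $V_\pi = V_\pi^K \oplus (V_\pi^K)^\perp$ and then let $t\to 0$. First, picking an $h$-orthonormal basis $\{Y_i\}$ of $\fp$ and a $\innerdots_0$-orthonormal basis $\{X_j\}$ of $\fk$, the set $\{Y_i\}\cup\{t^{-1}X_j\}$ is $g_t$-orthonormal in $\fg$, yielding
\begin{equation*}
C_{g_t} \;=\; C_h + t^{-2}\Cas_\fk \qquad\text{in }\mathcal U(\fg),
\end{equation*}
where $\Cas_\fk=\sum_j X_j^2$. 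Both $C_h$ and $\Cas_\fk$ are $\Ad(K)$-invariant, so the self-adjoint operator $A_t:=\pi(-C_{g_t})$ commutes with $\pi(K)$ and preserves $V_\pi^K$ and $(V_\pi^K)^\perp$ for every $\pi\in\widehat G$.

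Next I would analyze $A_t$ on each summand separately. Connectedness of $K$ forces $V_\pi^K=\ker\pi(\Cas_\fk)$, so $A_t|_{V_\pi^K}=\pi(-C_h)|_{V_\pi^K}$ is \emph{independent of $t$}, and its eigenvalues, as $\pi$ varies over $\widehat G_K$, are by \eqref{eq2:lambda1(G,g_A)} precisely the positive Laplace eigenvalues of $(G/K,h)$. On the complement $(V_\pi^K)^\perp$, centrality of $\Cas_\fk$ in $\mathcal U(\fk)$ together with Schur's lemma implies that $\pi(-\Cas_\fk)$ acts as a positive scalar on every non-trivial $K$-isotypic component of $V_\pi$. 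The key uniform estimate
\begin{equation*}
\pi(-\Cas_\fk)|_{(V_\pi^K)^\perp}\;\ge\; c_0\cdot\Id,\qquad c_0:=\lambda_1(K,g_0|_\fk)>0,
\end{equation*}
holds uniformly in $\pi\in\widehat G$, because these scalars are exactly the Casimir eigenvalues attached to the non-trivial elements of $\widehat K$, i.e.\ the positive Laplace eigenvalues of $K$ with its bi-invariant metric $g_0|_\fk$. Consequently $A_t\ge t^{-2}c_0\Id$ on $(V_\pi^K)^\perp$, and on all of $V_\pi$ when $\pi\notin\widehat G_K$.

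Putting these together with \eqref{eq2:lambda1(G,g_A)} applied to $G$ (i.e.\ $K=\{e\}$): non-spherical $\pi$ contribute eigenvalues $\ge t^{-2}c_0$ to $\Spec(G,g_t)$, while each spherical non-trivial $\pi$ contributes the $t$-independent eigenvalues of $\pi(-C_h)|_{V_\pi^K}$ (whose infimum over $\pi$ is $\lambda_1(G/K,h)$) plus further eigenvalues $\ge t^{-2}c_0$. The first observation immediately gives $\lambda_1(G,g_t)\le \lambda_1(G/K,h)$ for every $t>0$; conversely, as soon as $t$ is small enough that $t^{-2}c_0\ge \lambda_1(G/K,h)$, every contribution to $\lambda_1(G,g_t)$ is bounded below by $\lambda_1(G/K,h)$, forcing equality for all such $t$. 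The limit as $t\to 0$ therefore stabilizes at $\lambda_1(G/K,h)$. The main obstacle is securing the $\pi$-uniform positivity $c_0>0$: without it, the eigenvalues of $A_t$ transverse to $V_\pi^K$, and those on non-spherical $V_\pi$, might fail to blow up uniformly in $\pi$ as $t\to 0$, allowing $\Spec(G,g_t)$ to accumulate below $\lambda_1(G/K,h)$. The resolution exploits that $\pi(\Cas_\fk)$ depends on $\pi$ only through the $K$-types appearing in $V_\pi$, which reduces the required uniformity to the classical positivity of the first Laplace eigenvalue of the compact connected Lie group $K$ with its bi-invariant metric.
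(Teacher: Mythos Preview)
Your proof is correct and follows essentially the same route as the paper: decompose $C_{g_t}=C_h+t^{-2}\Cas_\fk$, split each $V_\pi$ as $V_\pi^K\oplus (V_\pi^K)^\perp$, observe that the operator is $t$-independent on $V_\pi^K$, and use the uniform lower bound $c_0=\lambda_1(K,g_0|_\fk)>0$ on the non-trivial $K$-Casimir eigenvalues to make the complementary contribution blow up. Your presentation is in fact slightly cleaner than the paper's---you use the operator inequality $A_t|_{(V_\pi^K)^\perp}\ge t^{-2}c_0\,\Id$ directly (implicitly relying on $\pi(-C_h)\ge 0$), whereas the paper computes the eigenvalues $t^{-2}\lambda^{\tau_i}+\nu_{i,j}$ explicitly via the $K$-isotypic decomposition---and you also observe the sharper fact that $\lambda_1(G,g_t)=\lambda_1(G/K,h)$ for all sufficiently small $t$, not just in the limit.
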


\begin{proof}
Write $h=g_{\Phi}$ with $\Phi\in \Sym_K^+(\fp)$. 
Clearly, $g_t=g_{\Psi_t}$ with $\Psi_t= \Phi \oplus t^2\Id_{\fk} $. 
Thus 
\begin{equation*}
C_t:=C_{\Psi_t}
= t^{-2}C_{\fk} + C_\Phi
\end{equation*}
by \eqref{eq2:C_Phi}, 
where $C_\fk$ is the Casimir element of $\fk$ with respect to $\innerdots_0$, that is, $C_{\fk}= \sum_{i=1}^{\dim \fk} X_i^2$ for any orthonormal basis $\{X_1,\dots,X_{\dim\fk}\}$ of $\fk$ with respect to $\innerdots_0$. 

Let $(\pi,V_\pi)\in\widehat G$.
Let $\innerdots_\pi$ denote the Hermitian inner product on $V_\pi$ such that $\pi$ is unitary. 
We decompose $V_\pi=V_\pi^K\oplus (V_\pi^K)^\perp$ orthogonally with respect to $\innerdots_\pi$. 
It follows that the subspaces $V_\pi^K$ and $(V_\pi^K)^\perp$ are invariants by $\pi(C_\fk)$ and $\pi(C_\Phi)$, and furthermore, $\pi(C_\fk)|_{V_\pi^K}=0$. 

From \eqref{eq2:lambda1(G,g_A)}, we obtain that $\lambda_1(G,g_t)$ is given by the minimum in 
\begin{equation*}
\begin{aligned}
\left\{ \lambda_{\min}(\pi(-C_\Phi)|_{V_\pi^K}): \pi\in\widehat G_K,\, \pi\not\simeq 1_G \right\} 
\cup 
\left\{ \lambda_{\min}(\pi(-C_t)|_{(V_\pi^K)^\perp}): \pi\in\widehat G,\, \pi\not\simeq 1_G \right\} . 
\end{aligned}
\end{equation*}
It remains to show that the minimum is attained in the set at the left-hand side in the displayed formula for $t$ sufficiently small, since this number is precisely $\lambda_1(G/K,h)$ again by \eqref{eq2:lambda1(G,g_A)}. 

We fix $(\pi,V_\pi)\in\widehat G$. 
By restricting the action $\pi$ to $K$, $V_\pi$ decomposes as the sum of its isotypical components $V_\pi=\bigoplus_{i=0}^s U_{i}$.
More precisely, there are pairwise non-equivalent irreducible representations $\tau_0,\dots,\tau_s$ of $K$ such that $U_i$ is the sum of all $\pi(K)$-invariant subspaces equivalent to $\tau_i$. 
Note that $V_\pi^K$ is the isotypical component associated to the trivial representation $1_K$ of $K$. 
Thus, we can assume that $\tau_0=1_K$ by letting $U_0=0$ if $\pi\notin \widehat G_K$. 

One can check that $U_0,\dots,U_s$ are pairwise orthogonal with respect to $\innerdots_\pi$, thus $(V_\pi^K)^\perp= \bigoplus_{i=1}^s U_i$. 
Each $U_i$ is invariant by $\pi(C_\Phi)$ and $\pi(C_\fk)$. 
Moreover, since $C_\fk$ is a Casimir element, $\pi(-C_\fk)|_{U_i} = \lambda^{\tau_i}\Id_{U_i}$ for some scalar $\lambda^{\tau_i}$ for all $1\leq i\leq s$ (see Remark~\ref{rem2:C_I}). 
One has that $\lambda^{\tau_i}>0$ for all $1\leq i\leq s$ since $K$ is assumed connected. 
We deduce that there is a basis of eigenvectors $\{v_{i,j}: 1\leq i\leq s,\, 1\leq j\leq \dim U_i\}$ of $\pi(-C_t)|_{(V_\pi^K)^\perp}$ with $v_{i,j}\in U_i$ for all $j$. 
Since $\pi(-C_\fk)$ acts by an scalar on $U_i$, $v_{i,j}$ is also an eigenvector of $\pi(-C_\Phi)$, say $\pi(-C_\Phi)v_{i,j}= \nu_{i,j}v_{i,j}$. 
Hence, 
\begin{equation*}
\pi(-C_t)v_{i,j} = t^{-2}\pi(-C_\fk)v_{i,j} + \pi(-C_\Phi)v_{i,j} = (t^{-2}\lambda^{\tau_i}+\nu_{i,j})v_{i,j}.
\end{equation*}
Since $\lambda^{\tau_i}>0$ and $\nu_{i,j}\geq0$ for $i\geq 1$, the eigenvalue $t^{-2}\lambda^{\tau_i} +\nu_{i,j}$ can be as large as we need by taking $t$ sufficiently small. 
Moreover, since $\inf \{\lambda^{\tau}: \tau\in\widehat  K,\, \tau\not\simeq 1_K\}=\lambda_1(K,\widetilde g_0)>0$,
where $\widetilde g_0$ denotes the bi-invariant metric on $K$ induced by $\innerdots_0|_{\fk\times\fk}$, we conclude that $$\min \left\{ \lambda_{\min}(\pi(-C_t)|_{(V_\pi^K)^\perp}): \pi\in\widehat G,\, \pi\not\simeq 1_G \right\}$$ can be as large as we need, as requested. 
\end{proof}

\begin{proof}[Proof of Theorem~\ref{thm1:comparison}]
	By assumption, there is $C>0$ such that $\lambda_1(G,g)\diam(G,g)^2\leq C$ for all $g\in \mathcal M(G)$. 
	We fix $h \in \mathcal M(G,K)$. 
	For $t>0$, let $g_t$ be the left-invariant metric on $G$ as in the statement of Lemma~\ref{lem-comp:lambda1}. 
	From Lemmas~\ref{lem-comp:diam} and \ref{lem-comp:lambda1}, it follows that
	\begin{equation}
	\lambda_1(G/K,h)\diam(G/K,h)^2 \leq \limsup_{t\to 0} \lambda_1(G, g_t)\diam(G,g_t)^2\leq C,
	\end{equation}
	and the proof is complete. 
\end{proof}

\section{Diameter estimates} \label{sec:diam}
We will use the assumptions and notations introduced in Remark~\ref{rem2:assumption+notation} and Subsection~\ref{subsec:G-invmetrics}. 
In this section we estimate the diameter of the homogeneous Riemannian manifold $(G/K,g_\Phi)$ in terms of the eigenvalues of $\Phi$, for any $\Phi\in \Sym_K^+(\fp)$.

\subsection{Simple estimates for the diameter}

To motivate the diameter estimates of this section, we begin by discussing the simple estimates
\begin{equation}\label{eq3:diam-simple-estimates}
\frac{\diam(G/K,g_{\Id_\fp})}{\sigma_1(\Phi)} 
	\leq \diam(G/K,g_\Phi) \leq 
\frac{\diam(G/K,g_{\Id_\fp})}{\sigma_q(\Phi)}
\end{equation}
for $\Phi\in\Sym_K^+(\fp)$. 
We recall that, according to Notation~\ref{not2:diagonaldecomposition}, $\sigma_1(\Phi)^{-2}$ (resp.\ $\sigma_q(\Phi)^{-2}$) denotes the smallest (resp.\ largest) eigenvalue of $\Phi$.
This estimate will follow from the next result.

\begin{lemma}\label{lem3:diam-inequality}
Let $\Phi,\Psi$ be in $\Sym_K^+(\fp)$ satisfying that $\Phi \geq \Psi$. 
Then
$$
\diam(G/K,g_\Phi)\geq \diam(G/K,g_\Psi).
$$
\end{lemma}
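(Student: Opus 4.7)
The plan is essentially a one-line monotonicity argument. By Lemma~\ref{lem2:A^tAleqB^tB}, the hypothesis $\Phi\geq \Psi$ already gives the pointwise comparison $g_\Phi\geq g_\Psi$ between the associated $G$-invariant Riemannian metrics on $G/K$. The only remaining task is to invoke the general fact that a pointwise larger Riemannian metric produces a pointwise larger Riemannian distance function, hence a larger diameter.

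More concretely, I would fix any piecewise smooth curve $\gamma:[a,b]\to G/K$. The pointwise inequality $g_\Phi(v,v)\geq g_\Psi(v,v)$ for every tangent vector $v$ yields, after integrating along $\gamma$,
\[
\op{length}_{(G/K,g_\Phi)}(\gamma) = \int_a^b g_\Phi(\gamma'(t),\gamma'(t))^{1/2}\,dt \;\geq\; \int_a^b g_\Psi(\gamma'(t),\gamma'(t))^{1/2}\,dt = \op{length}_{(G/K,g_\Psi)}(\gamma).
\]
Taking the infimum over all piecewise smooth curves joining a fixed pair $p,q\in G/K$ gives $\dist_{g_\Phi}(p,q)\geq \dist_{g_\Psi}(p,q)$, and taking the supremum over pairs $(p,q)$ then yields $\diam(G/K,g_\Phi)\geq \diam(G/K,g_\Psi)$.

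There is no real obstacle here; this is exactly the monotonicity principle that is cited right afterward as \cite[2.17]{Lauret-EGSconj} in the proof of Proposition~\ref{prop2:diam-pseudo-Hclosed}. The only point worth emphasizing is the direction of the inequality: because $\inner{X}{X}_\Phi = \inner{\Phi(X)}{X}_0$ is monotone increasing in $\Phi$ with respect to $\innerdots_0$, a larger $\Phi$ produces a larger inner product on $\fp$, a larger $G$-invariant Riemannian metric on $G/K$, and consequently a larger diameter. This direction is also consistent with the simple estimates \eqref{eq3:diam-simple-estimates}, which are the intended immediate consequence of this lemma (applied to $\Psi = \sigma_1(\Phi)^{-2}\Id_\fp$ and $\Phi\leq \sigma_q(\Phi)^{-2}\Id_\fp$ respectively).
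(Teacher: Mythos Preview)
Your proof is correct and follows essentially the same approach as the paper: invoke Lemma~\ref{lem2:A^tAleqB^tB} to pass from $\Phi\geq\Psi$ to $g_\Phi\geq g_\Psi$, then apply the standard monotonicity of diameter under pointwise metric comparison. The only difference is that you spell out the length--distance--diameter monotonicity argument explicitly, whereas the paper cites it as \cite[Lem.~2.10]{Lauret-EGSconj}.
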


\begin{proof}
By Lemma~\ref{lem2:A^tAleqB^tB}, $\Phi\geq \Psi$ forces to  $g_\Phi(X,X)\geq g_\Psi(X,X) $ for all $X\in\fp$. 
The proof follows by monotonicity (see e.g.\ \cite[Lem.~2.10]{Lauret-EGSconj}).
\end{proof}

We now prove \eqref{eq3:diam-simple-estimates}.
Let $\fp=\fp_1\oplus\dots\oplus \fp_q$ be any diagonal decomposition of $\Phi$ (see Notation~\ref{not2:diagonaldecomposition}). 
Then
\begin{equation}\label{eq2:A^tAprimero}
\Phi=
\bigoplus_{i}
\sigma_{i}(\Phi)^{-2}\,\Id_{\fp_{i}} 
\leq 
\bigoplus_{i} \sigma_q(\Phi)^{-2} \,\Id_{\fp_{i}} =\sigma_q(\Phi)^{-2}\, \Id_{\fp},
\end{equation}
since $\sigma_q(\Phi)^{-2}\geq \sigma_i(\Phi)^{-2}$ for all $i$.
Lemma~\ref{lem3:diam-inequality} gives
$
\diam(G/K,g_{\Phi}) \leq \diam (G/K,g_{\sigma_q(\Phi)^{-2}\Id_\fp}) 
=\diam (G/K,\sigma_q(\Phi)^{-2} g_{\Id_\fp}) , 
$
and consequently the inequality at the right-hand side of \eqref{eq3:diam-simple-estimates} follows since, for any arbitrary Riemannian manifold, one has that
\begin{equation}\label{eq3:diam_scale}
\begin{aligned}
\diam(M,t \,g) &= \sqrt{t}\diam(M,g)
\quad\text{for all }t>0.
\end{aligned}
\end{equation}
The other estimate follows analogously by using $\Phi\geq \sigma_1(\Phi)^{-2} \, \Id_\fp$.

\begin{notation}\label{not2:HH_P^kCC_P^k}
For $\Phi\in\Sym_K^+(\fp)$, a diagonal decomposition $\fp=\fp_1\oplus\dots\oplus \fp_q$ of $\Phi$, and any index $1\leq k\leq q$, we set 
\begin{equation}
\HH_{\Phi,\{\fp_{i}\},k} = \bigoplus_{i=1}^k \fp_{i}
\quad\text{and}\quad
\CC_{\Phi,\{\fp_{i}\},k} = \bigoplus_{i=k}^{q} \fp_{i}.
\end{equation}
(We recall from Notation~\ref{not2:diagonaldecomposition} that the numbers $\sigma_1(\Phi),\dots,\sigma_q(\Phi)$ are determined by $\Phi=\bigoplus_{i} \sigma_{i}(\Phi)^{-2}\, \Id_{\fp_{i}} $ and $\sigma_1(\Phi)\geq\dots\geq \sigma_q(\Phi)$.)
We will abbreviate them by $\HH_{k}$ and $\CC_{k}$ when $\Phi$ and $\{\fp_{i}\}$ are clear in the context. 
\end{notation}

The next result will use the corresponding sub-Riemannian manifold $(G/K,\HH_k,g_0|_{\HH_k})$ and the singular Riemannian manifold $(G/K,g_0|_{\CC_k}^*)$ introduced in Subsection~\ref{subsec:diam-non-Riemannian}.

\begin{proposition}\label{prop3:diam-k}
Let $\Phi\in \Sym_K^+(\fp)$ and let $\fp=\fp_1\oplus\dots\oplus \fp_q$ be a diagonal decomposition of $\Phi$ (see Notation~\ref{not2:diagonaldecomposition}). 
For any index $1\leq k\leq q$, we have that
\begin{equation}\label{eq3:diamA}
\frac{ \diam(G/K,g_0|_{\CC_{k}}^* )} {\sigma_k(\Phi)} 
\leq \diam(G/K,g_\Phi) \leq 
\frac{\diam(G/K,\HH_{k},g_0|_{\HH_{k}} )}{\sigma_k(\Phi)}.
\end{equation}
\end{proposition}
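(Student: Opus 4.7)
The plan is to leverage the diagonal decomposition $\fp=\fp_1\oplus\dots\oplus\fp_q$ of $\Phi$, splitting it into the ``small eigenvalue'' part $\HH_k=\fp_1\oplus\dots\oplus\fp_k$ and the ``large eigenvalue'' part $\CC_k=\fp_k\oplus\dots\oplus\fp_q$, where the threshold eigenvalue is $\sigma_k(\Phi)^{-2}$. Both inequalities should reduce to the pointwise comparison
\[
\sigma_k(\Phi)^{-2}\,g_0|_{\CC_k}^*(X,X) \;\leq\; \langle \Phi X, X\rangle_0 \;\leq\; \sigma_k(\Phi)^{-2}\,g_0(X,X)\quad\text{for }X\in\HH_k,
\]
combined with the monotonicity of length/distance under such comparisons together with Lemmas~\ref{lem2:sub-diam-restriccionRiemanniano} and \ref{lem2:sing-diam-restriccionRiemanniano} in spirit.

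For the upper bound, I would work with horizontal curves for the subbundle induced by $\HH_k$. Since $\sigma_1(\Phi)\geq\dots\geq\sigma_q(\Phi)$, the restriction $\Phi|_{\HH_k}$ has eigenvalues $\sigma_1(\Phi)^{-2}\leq\dots\leq\sigma_k(\Phi)^{-2}$, and therefore $\Phi|_{\HH_k}\leq \sigma_k(\Phi)^{-2}\Id_{\HH_k}$. Hence for any horizontal curve $\gamma$ in $(G/K,\HH_k,g_0|_{\HH_k})$, writing $\gamma'(t)=dL_{a(t)}X(t)$ with $X(t)\in\HH_k$, one has $g_\Phi(\gamma'(t),\gamma'(t))=\langle \Phi X(t),X(t)\rangle_0\leq \sigma_k(\Phi)^{-2}\langle X(t),X(t)\rangle_0$, so its $g_\Phi$-length is at most $\sigma_k(\Phi)^{-1}$ times its sub-Riemannian length. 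Taking the infimum over horizontal curves joining two fixed points yields the distance inequality and, upon taking the supremum over pairs of points, the upper bound in \eqref{eq3:diamA}. (The inequality is trivial when $\HH_k$ is not bracket generating, since then the sub-Riemannian diameter is $+\infty$.)

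For the lower bound, I would compare $g_\Phi$ globally to the singular metric $\sigma_k(\Phi)^{-2}\,g_0|_{\CC_k}^*$. Decompose $\fp=\HH_{k-1}\oplus \CC_k$ orthogonally with respect to $\innerdots_0$ (and for $k=1$ just set $\HH_0=\{0\}$). For $X=X_\HH+X_\CC$ in this decomposition, the orthogonality of the $\fp_i$'s with respect to both $\innerdots_0$ and $\innerdots_\Phi$ gives
\[
\langle \Phi X,X\rangle_0 \;=\;\langle \Phi X_\HH,X_\HH\rangle_0+\langle \Phi X_\CC,X_\CC\rangle_0\;\geq\;\sigma_k(\Phi)^{-2}\langle X_\CC,X_\CC\rangle_0 \;=\; \sigma_k(\Phi)^{-2}\,g_0|_{\CC_k}^*(X,X),
\]
using that $\Phi|_{\CC_k}\geq \sigma_k(\Phi)^{-2}\Id_{\CC_k}$. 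Translating this pointwise inequality to vector fields via left translation, we obtain the domination $g_\Phi \geq \sigma_k(\Phi)^{-2}\,g_0|_{\CC_k}^*$ of bilinear forms on each tangent space. By the monotonicity of the (pseudo-)distance (see \cite[Lem.~2.17]{Lauret-EGSconj}), every curve is at least $\sigma_k(\Phi)^{-1}$ times as long in the singular metric, hence $\diam(G/K,g_\Phi)\geq \sigma_k(\Phi)^{-1}\diam(G/K,g_0|_{\CC_k}^*)$ by the scaling identity \eqref{eq3:diam_scale}.

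The only subtle points are bookkeeping: ensuring that the singular projection $g_0|_{\CC_k}^*$ really picks out the $\CC_k$-component with respect to the $\innerdots_0$-orthogonal splitting (which follows from Notation~\ref{not2:symmetricbilinearform} together with the fact that $\HH_{k-1}$ is exactly the $\innerdots_0$-orthogonal complement of $\CC_k$ in $\fp$), and handling the degenerate cases $k=1$ and $k=q$ where one of $\HH_{k-1}$ or $\CC_k$ reduces appropriately. The argument contains no serious obstacle; the main conceptual content is simply the pointwise sandwich of $\Phi$ by multiples of the identity when one passes to $\HH_k$ or to $\CC_k$.
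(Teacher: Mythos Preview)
Your proposal is correct and follows essentially the same approach as the paper: both arguments rest on the pointwise sandwich $\sigma_k(\Phi)^{-2}\,g_0|_{\CC_k}^* \leq g_\Phi$ and $g_\Phi|_{\HH_k}\leq \sigma_k(\Phi)^{-2}\,g_0|_{\HH_k}$, together with monotonicity of lengths. The only cosmetic difference is that the paper factors each inequality through an auxiliary \emph{Riemannian} metric ($\Psi_1$ for the lower bound, $\Psi_2$ for the upper bound) so as to invoke Lemmas~\ref{lem3:diam-inequality}, \ref{lem2:sub-diam-restriccionRiemanniano}, and \ref{lem2:sing-diam-restriccionRiemanniano} as stated, whereas you merge those two steps into a single direct comparison.
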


\begin{proof}
We abbreviate $\sigma_i=\sigma_i(\Phi)$ for all $i$. 
We set 
\begin{equation*}
\Psi_1=\left(\bigoplus_{i<k} \tfrac{\sigma_k^2}{\sigma_{i}^2} \, \Id_{\fp_{i,j}}\right) 
\bigoplus \;\Id_{\CC_k}. 
\end{equation*}
Similarly as in \eqref{eq2:A^tAprimero}, one can easily see that $\Phi\geq \sigma_k^{-2}\Psi_1$.
Lemma~\ref{lem3:diam-inequality} and \eqref{eq3:diam_scale} imply that 
$
\diam(G/K,g_\Phi)
\geq \diam(G/K,g_{\sigma_k^{-2} \Psi_1}) 
= \sigma_k^{-1}\diam(G/K,g_{\Psi_1}).
$ 
Hence, the inequality at the left-hand side in \eqref{eq3:diamA} follows since $\diam(G/K,g_{\Psi_1})\geq \diam(G/K,g_0|_{\CC_{k}}^* )$ by Lemma~\ref{lem2:sing-diam-restriccionRiemanniano}. 

We now establish the inequality at the right in \eqref{eq3:diamA}. 
Similarly as above, by setting 
\begin{equation*}
\Psi_2=\left(\bigoplus_{i>k} \tfrac{\sigma_{i}^2}{\sigma_k^2} \, \Id_{\fp_{i,j}}\right) 
\bigoplus \; \Id_{\HH_k},
\end{equation*}
one has $\Phi\leq \sigma_k^{-2}\, \Psi_2$, thus $\diam(G/K,g_\Phi) \leq \sigma_k^{-1}\diam(G/K,g_{\Psi_2})$.
The assertion follows since $\diam(G/K,g_{\Psi_2})\leq \diam(G/K,\HH_{k},g_0|_{\HH_{k}} )$ by Lemma~\ref{lem2:sub-diam-restriccionRiemanniano}. 
\end{proof}

\begin{remark}
Some words of caution about \eqref{eq3:diamA} are necessary at this point. 
Unlike in \eqref{eq3:diam-simple-estimates}, the coefficients in the extremes depend on $\Phi$ (more precisely on $\{\fp_{i}\}$). 
Moreover, the inequality at the left-hand (resp.\ right-hand) side is useless when $\diam(G/K,g_0|_{\CC_{k}}^*)=0$ (resp.\ $\diam(G/K,\HH_{k}, g_0|_{\HH_{k}} )=\infty $).
\end{remark}

\section{Eigenvalue estimates} \label{sec:eigenvalues}
We continue using the assumptions and notations introduced in Remark~\ref{rem2:assumption+notation} and Subsection~\ref{subsec:G-invmetrics}. 
In this section we estimate the smallest positive eigenvalue of the Laplace--Beltrami operator associated to $(G/K,g_\Phi)$ for $\Phi\in \Sym_K^+(\fp)$, in terms of the eigenvalues of $\Phi$.
We will proceed analogously to the previous section.

\subsection{Simple estimates for the first eigenvalue}
We have seen in \eqref{eq2:A^tAprimero} that $\sigma_1(\Phi)^2\Id_\fp\leq \Phi \leq \sigma_q(\Phi)^2\Id_\fp$ for all $\Phi\in \Sym_K^+(\fp)$.
Recall from Notation~\ref{not2:diagonaldecomposition} that $\sigma_1(\Phi)^{-2}$ and $\sigma_q(\Phi)^{-2}$ stand for the smallest and largest eigenvalues of $\Phi$ respectively. 
The estimates
\begin{equation}\label{eq4:lambda1-simple-estimates}
\lambda_1(G/K,g_{\Id_\fp})\, \sigma_q(\Phi)^2
\leq \lambda_1(G/K,g_\Phi)  \leq 
\lambda_1(G/K,g_{\Id_\fp})\, \sigma_1(\Phi)^2
\end{equation}
follow immediately form the next result.

\begin{lemma} \label{lem4:lambda1-inequality}
Let $\Phi,\Psi$ be in $\Sym_K^+(\fp)$ satisfying that $\Phi \geq \Psi$. 
Then $\pi(-C_\Phi)|_{V_\pi^K}\leq \pi(-C_\Psi)|_{V_\pi^K}$ for every finite dimensional unitary representation $\pi$ of $G$. 
Moreover, 
$$
\lambda_1(G/K,g_\Phi)\leq \lambda_1(G/K,g_\Psi).
$$
\end{lemma}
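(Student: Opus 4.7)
The plan is to reduce the operator inequality on $V_\pi^K$ to a matrix inequality that follows from operator anti-monotonicity of $A\mapsto A^{-1}$, and then read off the eigenvalue statement from the variational description \eqref{eq2:lambda1(G,g_A)}.

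First, I would fix a $\innerdots_0$-orthonormal basis $\{Y_1,\dots,Y_n\}$ of $\fp$, so that the Gram matrix $S_\Phi = (\inner{Y_i}{Y_j}_\Phi)_{i,j}$ from \eqref{eq2:C_Phi} is precisely the matrix of $\Phi$ in this basis, and hence $T_\Phi = S_\Phi^{-1}$ is the matrix of $\Phi^{-1}$. Thus $C_\Phi = \sum_{i,j}(\Phi^{-1})_{ij}\,Y_iY_j$, and similarly for $C_\Psi$. The hypothesis $\Phi\geq\Psi$ means $\Phi-\Psi$ is positive semi-definite with respect to $\innerdots_0$; by the standard operator anti-monotonicity of $A\mapsto A^{-1}$ on positive-definite operators, this gives $D:=\Psi^{-1}-\Phi^{-1}\geq 0$ with respect to $\innerdots_0$.

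Next, fix a finite-dimensional unitary representation $\pi$. Since the $\pi(Y_j)$ are skew-Hermitian on $V_\pi$ and $V_\pi^K$ is invariant under each $\pi(X)$, $X\in\fp$, both $\pi(-C_\Phi)|_{V_\pi^K}$ and $\pi(-C_\Psi)|_{V_\pi^K}$ are self-adjoint positive semi-definite operators on $V_\pi^K$. For $v\in V_\pi^K$, a direct computation using skew-Hermiticity yields
\begin{equation*}
\inner{\pi(-C_\Psi)v}{v}_\pi-\inner{\pi(-C_\Phi)v}{v}_\pi
= \sum_{i,j=1}^n D_{ij}\,\inner{\pi(Y_j)v}{\pi(Y_i)v}_\pi.
\end{equation*}
The key step is to show this quantity is nonnegative. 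Writing $D=LL^t$ with $L$ real (possible since $D\geq 0$ and symmetric), one rearranges
\begin{equation*}
\sum_{i,j} D_{ij}\inner{\pi(Y_j)v}{\pi(Y_i)v}_\pi
= \sum_k \Bigl\|\pi\Bigl(\sum_i L_{ik}Y_i\Bigr)v\Bigr\|_\pi^2 \geq 0,
\end{equation*}
exploiting that the Gram matrix $(\inner{\pi(Y_i)v}{\pi(Y_j)v}_\pi)_{i,j}$ is positive semi-definite Hermitian, and using the symmetry of $D$ to identify the sum. This is the one substantive manipulation in the proof and is where I expect any bookkeeping trouble (indexing of $i$ vs.\ $j$ in the Hermitian inner product), but it is routine.

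From the quadratic-form inequality just established, the min-max characterization of self-adjoint operators gives $\pi(-C_\Phi)|_{V_\pi^K}\leq \pi(-C_\Psi)|_{V_\pi^K}$, and in particular $\lambda_{\min}(\pi(-C_\Phi)|_{V_\pi^K})\leq \lambda_{\min}(\pi(-C_\Psi)|_{V_\pi^K})$ for every $\pi\in\widehat G_K$ with $\pi\not\simeq 1_G$. Taking the minimum over such $\pi$ and applying \eqref{eq2:lambda1(G,g_A)} yields $\lambda_1(G/K,g_\Phi)\leq \lambda_1(G/K,g_\Psi)$, completing the proof.
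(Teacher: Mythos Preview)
Your proof is correct and follows essentially the same route as the paper: both use anti-monotonicity of the inverse to get $\widetilde T:=\Psi^{-1}-\Phi^{-1}\geq 0$, then show that the corresponding element $\widetilde C=\sum_{i,j}\widetilde t_{i,j}Y_iY_j$ maps to a positive semi-definite operator under $\pi$. The only cosmetic difference is that the paper diagonalizes $\widetilde T=PD^2P^t$ and changes basis in $\fp$ to write $\widetilde C=\sum_k d_k^2 Z_k^2$ as a sum of squares in $\mathcal U(\fg)$ (so $\pi(-\widetilde C)\geq 0$ holds as an operator on all of $V_\pi$ at once), whereas you factor $D=LL^t$ and verify positivity at the level of the quadratic form $\inner{\pi(-\widetilde C)v}{v}_\pi=\sum_k\|\pi(\sum_i L_{ik}Y_i)v\|_\pi^2$; these are interchangeable ways of expressing the same positivity.
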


\begin{proof}
Let $\{Y_1,\dots,Y_n\}$ be any basis of $\fp$. 
We set $S=(\inner{Y_i}{Y_j}_\Phi)_{i,j}$, $S'=(\inner{Y_i}{Y_j}_\Psi)_{i,j}$, $T=S^{-1}=(t_{i,j})_{i,j}$, and $T'=(S')^{-1}=(t_{i,j}')_{i,j}$, which are all $n\times n$ positive definite symmetric matrices satisfying $S\geq S'$ and $T\leq T'$ since $\Phi\geq \Psi$. 
Write $\widetilde T=T'-T=(\widetilde t_{i,j})_{i,j}$. 
According to \eqref{eq2:C_Phi}, we have that 
\begin{equation}\label{eq4:C_Psi=C_Phi+tildeC}
C_{\Psi} 
= \sum_{i,j}t_{i,j}'\; Y_i\, Y_j
= \sum_{i,j}\, (t_{i,j}+\widetilde t_{i,j}) \; Y_i\, Y_j 
=C_\Phi+\widetilde C,
\end{equation}
where $\widetilde C= \sum_{i,j} \widetilde t_{i,j}\; Y_i\, Y_j$. 
Since $\widetilde T$ is a positive semi-definite symmetric matrix, there are $P=(p_{i,j})_{i,j} \in\Ot(n)$ and $d_1,\dots,d_n\in\R$ such that $\widetilde T=PD^2P^t$, where $D=\diag(d_1,\dots,d_n)$. 
Let $\{Z_1,\dots,Z_n\}$ be the basis of $\fp$ determined by $Y_i=\sum_{j} p_{i,j}Z_j$ for all $i$.
Hence
\begin{equation*}
	\begin{aligned}
		\widetilde C 
		&= \sum_{i,j} \widetilde t_{i,j} (\sum_k p_{i,k}Z_k)(\sum_l p_{j,l}Z_l)
		= \sum_{k,l} (\sum_{i,j} p_{i,k}\widetilde t_{i,j}p_{i,l})\;   Z_k\,Z_l
		\\
		&= \sum_{k,l} (P^t\widetilde TP)_{k,l}\;   Z_k\,Z_l
		= \sum_{k} d_{k}^2\;   Z_k^2.
	\end{aligned}
\end{equation*}

Let $(\pi,V_\pi)$ be any finite dimensional unitary representation of $G$.
By \eqref{eq4:C_Psi=C_Phi+tildeC}, we have that 
\begin{equation}
\pi(-C_\Psi)|_{V_\pi^K}=\pi(-C_\Phi)|_{V_\pi^K}+ \pi(-\widetilde C)|_{V_\pi^K}\geq \pi(-C_\Phi)|_{V_\pi^K}
\end{equation}
since $\pi(-\widetilde C)|_{V_\pi^K}=\sum_k d_k^2\pi(-X_k^2)|_{V_\pi^K}\geq 0$ because $\pi(-X^2) \geq0$ for any $X\in\fg$.

We now show the second assertion. 
For $\pi\in \widehat G$, since $\pi(-C_\Phi)|_{V_\pi^K} \leq \pi(-C_\Psi)|_{V_\pi^K}$, we have that $\lambda_{\min}(\pi(-C_\Phi)|_{V_\pi^K}) \leq  \lambda_{\min}(\pi(-C_\Psi)|_{V_\pi^K})$.
Hence, \eqref{eq2:lambda1(G,g_A)} immediately implies that $\lambda_1(G/K,g_\Phi)\leq \lambda_1(G/K,g_\Psi)$. 
\end{proof}

\begin{notation}
Let $\Phi\in\Sym_K^+(\fp)$. 
For any $\Ad(K)$-invariant subspace $\fq$ of $\fp$ we set 
\begin{equation}
C_{\fq}^\infty(G/K) =\{f\in C^\infty(G/K): X\cdot f=0 \;\text{for all } X\in\fq\}.
\end{equation} 
By \eqref{eq2:PeterWeyl}, the closure of $C_{\fq}^\infty(G/K)$ in the Hilbert space $L^2(G/K)$ is given by
\begin{equation}\label{eq4:closureC_Pk^infty}
\op{closure}(C_{\fq}^\infty(G/K)) = \bigoplus_{\pi\in\widehat G} (V_\pi^K)^{\fq} \otimes V_\pi^*,
\end{equation}
where $(V_\pi^K)^{\fq}=\{ v\in V_\pi^K: \pi(X)\cdot v=0\text{ for all }X\in\fq\}$. 
In particular, by Remark~\ref{rem2:C_I}, the Laplace--Beltrami operator $\Delta_{\Id_\fp}$ of $(G/K,g_{\Id_\fp})$ preserves $C_{\fq}^\infty(G/K)$. 
Whenever $C_{\fq}^\infty(G/K)$ has dimension strictly greater than one, we denote by  $\lambda_1(\Delta_{\Id_\fp}|_{C_{\fq}^\infty(G/K)})$ the smallest positive eigenvalue of $\Delta_{\Id_\fp}|_{C_{\fq}^\infty(G/K)}$.
Otherwise, when $C_{\fq}^\infty(G/K)$ contains only constant functions on $G/K$ (e.g.\ if $\fq$ is bracket generating because $\dim (V_\pi^K)^{\fq}=0$ for all $\pi\in\widehat G_K$), we set $\lambda_1(\Delta_{\Id_\fp}|_{C_{\fq}^\infty(G/K)})=\infty$ by convention. 
\end{notation}

\begin{remark}
From \eqref{eq4:closureC_Pk^infty}, we know that  $f_{v\otimes \varphi}$ is an eigenfunction of $\Delta_{\Id_\fp}|_{C_{\fq}^\infty(G/K)}$ for every $\varphi\in V_\pi^*$ and every non-zero eigenvector $v\in (V_\pi^K)^{\fq}$ of $\pi(-C_{\Id_\fp})$. 
Consequently, 
\begin{equation}\label{eq4:spec_I|C_Pk}
\Spec (\Delta_{\Id_\fp}|_{C_{\fq}^\infty(G/K)}) = \bigcup_{\pi\in\widehat G_K} \big\{\!\big\{ \rule{-4mm}{0mm}
\underbrace{\lambda^{\pi},\dots, \lambda^{\pi}}_{(d_\pi\dim (V_\pi^K)^{\fq})\text{-times}} 
\rule{-4mm}{0mm}\big\}\!\big\}.
\end{equation}
Recall from Remark~\ref{rem2:C_I} that $\lambda^\pi$ is determined by $\pi(-C_{\Id_\fp})|_{V_\pi^K} =  \lambda^\pi\, \Id_{V_\pi^K}$, for any $\pi \in \widehat G_K$.

Suppose there is a closed subgroup $H$ of $G$ with Lie algebra $\fh$ satisfying that $\fk\oplus \fq\subset \fh\neq\fg$. 
Clearly, $V_\pi^H\subset (V_\pi^K)^\fq$ for all $\pi\in\widehat G$. 
Thus $C^\infty(G/H) \subset  C_\fq^\infty(G/K)$ and consequently 
\begin{equation}\label{eq:lambda1(singular)<infty}
\lambda_1(\Delta_{\Id_\fp}|_{C_{\fq}^\infty(G/K)})\leq \lambda_1(G/H,g_{\Id_{\fh^\perp}})<\infty, 
\end{equation}
where $\fh^\perp$ denotes the orthogonal complement of $\fh$ in $\fg$ with respect to $\innerdots_0$. 
\end{remark}

We recall from Notation~\ref{not2:HH_P^kCC_P^k} that, once $\Phi\in\Sym_K^+(\fp)$ and a diagonal decomposition $\fp=\fp_1\oplus\dots\oplus\fp_q$ of $\Phi$ (see Notation~\ref{not2:diagonaldecomposition}) are fixed, we have 
\begin{equation}
\HH_{k} = \bigoplus_{i\leq k} \fp_{i}
\quad\text{and}\quad
\CC_{k} = \bigoplus_{i\geq k} \fp_{i},
\quad\text{and we set}\quad 
\mathcal F_k=\CC_{k}^\perp = \bigoplus_{i<k} \fp_{i},
\end{equation}
where the numbers $\sigma_1(\Phi)\geq\dots\geq \sigma_q(\Phi)$ satisfy $\Phi=\bigoplus_i \frac{1}{\sigma_i(\Phi)^2}\, \Id_{\fp_i}$.

\begin{proposition}\label{prop4:lambda1-k}
Let $\Phi\in\Sym_K^+(\fp)$ and let $\fp=\fp_1\oplus\dots\oplus\fp_q$ be a diagonal decomposition of $\Phi$ (see Notation~\ref{not2:diagonaldecomposition}). 
For any index $1\leq k\leq q$, we have that 
\begin{equation}\label{eq3:lambda_1(A)}
\lambda_1(G/K,\HH_{k},g_0|_{\HH_{k}}) \; \sigma_k(\Phi)^2
\leq \lambda_1(G/K,g_\Phi) \leq 
\lambda_1(\Delta_{\Id_{\fp}}|_{C_{\FF_{k}}^\infty(G/K)})\; \sigma_k(\Phi)^2.
\end{equation}
\end{proposition}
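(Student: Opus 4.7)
The plan is to prove both inequalities at the level of the operators $\pi(-C_\Phi)$ on $V_\pi^K$, and then pass to the smallest positive eigenvalue using \eqref{eq2:lambda1(G,g_A)} and \eqref{eq2:lambda_1(G,H,g_I|H)}. The starting point is to fix a $\innerdots_0$-orthonormal basis $\{X_{i,l}:1\le i\le q,\,1\le l\le \dim \fp_i\}$ of $\fp$ respecting the diagonal decomposition, as in Remark~\ref{rem2:basis}. Formula \eqref{eq2:C_Phi-diagonal} then gives, for any finite-dimensional unitary representation $(\pi,V_\pi)$ of $G$,
$$
\pi(-C_\Phi)=\sum_{i=1}^q \sigma_i(\Phi)^2 \sum_{l=1}^{\dim \fp_i}\pi(-X_{i,l}^2),
$$
where each summand $\pi(-X_{i,l}^2)=\pi(X_{i,l})^{\ast}\pi(X_{i,l})$ is positive semi-definite and preserves $V_\pi^K$. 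This identity is the operator-theoretic analogue of the tools used in the diameter proof of Proposition~\ref{prop3:diam-k}.

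For the left inequality I would exploit monotonicity of this sum. Dropping the (non-negative) terms with $i>k$ and using $\sigma_i(\Phi)^2\ge\sigma_k(\Phi)^2$ for $i\le k$ yields
$$
\pi(-C_\Phi)\ \ge\ \sigma_k(\Phi)^2\sum_{i=1}^{k}\sum_{l=1}^{\dim\fp_i}\pi(-X_{i,l}^2)\ =\ \sigma_k(\Phi)^2\,\pi\!\left(-C_{(\HH_k,\,g_0|_{\HH_k})}\right),
$$
where the last equality is because $\{X_{i,l}\}_{i\le k,l}$ is a $g_0|_{\HH_k}$-orthonormal basis of $\HH_k$. Restricting to $V_\pi^K$, taking smallest eigenvalues, and finally the minimum over $\pi\in\widehat G_K$ with $\pi\not\simeq 1_G$, formulas \eqref{eq2:lambda1(G,g_A)} and \eqref{eq2:lambda_1(G,H,g_I|H)} give the left-hand inequality of \eqref{eq3:lambda_1(A)}.

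For the right inequality the idea is to test the Rayleigh quotient of $\pi(-C_\Phi)|_{V_\pi^K}$ against vectors $v\in (V_\pi^K)^{\FF_k}$. For such $v$ the summands with $i<k$ vanish since $\pi(X_{i,l})v=0$, while for $i\ge k$ we have $\sigma_i(\Phi)^2\le\sigma_k(\Phi)^2$, so
$$
\langle \pi(-C_\Phi)v,v\rangle_\pi\ =\ \sum_{i\ge k}\sum_l\sigma_i(\Phi)^2\,\|\pi(X_{i,l})v\|_\pi^2\ \le\ \sigma_k(\Phi)^2\sum_{i,l}\|\pi(X_{i,l})v\|_\pi^2\ =\ \sigma_k(\Phi)^2\,\lambda^\pi\,\|v\|_\pi^2,
$$
where the final equality uses $\pi(-C_{\Id_\fp})|_{V_\pi^K}=\lambda^\pi\,\Id_{V_\pi^K}$ from Remark~\ref{rem2:C_I}. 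The min-max principle then yields $\lambda_{\min}(\pi(-C_\Phi)|_{V_\pi^K})\le \sigma_k(\Phi)^2\lambda^\pi$ for every $\pi\in\widehat G_K\setminus\{1_G\}$ with $(V_\pi^K)^{\FF_k}\ne0$. Taking the minimum over such $\pi$ and combining with the explicit description \eqref{eq4:spec_I|C_Pk} of $\Spec(\Delta_{\Id_\fp}|_{C_{\FF_k}^\infty(G/K)})$ delivers the right-hand inequality; the case in which $C_{\FF_k}^\infty(G/K)$ contains only constants (e.g.\ when $\FF_k$ is bracket generating) is trivial by the convention $\lambda_1(\Delta_{\Id_\fp}|_{C_{\FF_k}^\infty(G/K)})=\infty$.

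The only delicate point I anticipate is that $(V_\pi^K)^{\FF_k}$ need not be $\pi(-C_\Phi)$-invariant, so the upper bound on $\lambda_{\min}(\pi(-C_\Phi)|_{V_\pi^K})$ must be extracted from the variational characterization rather than from a literal restriction of the operator. Apart from this routine caveat, the argument is a direct translation of the diameter proof of Proposition~\ref{prop3:diam-k} to the spectral side, with the sub-Riemannian Laplacian playing the role of the sub-Riemannian diameter on the left, and the Casimir-type operator $\pi(-C_{\Id_\fp})$ restricted to $(V_\pi^K)^{\FF_k}$ playing the role of the singular-metric diameter on the right.
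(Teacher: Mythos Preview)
Your argument is correct and follows essentially the same route as the paper: compare $\pi(-C_\Phi)$ with $\pi(-C_{(\HH_k,g_0|_{\HH_k})})$ via the operator inequality coming from $\sigma_i(\Phi)^2\ge\sigma_k(\Phi)^2$ for $i\le k$, and bound the Rayleigh quotient of $\pi(-C_\Phi)|_{V_\pi^K}$ from above by $\sigma_k(\Phi)^2\lambda^\pi$ using a test vector in $(V_\pi^K)^{\FF_k}$. The only cosmetic difference is that the paper first introduces auxiliary endomorphisms $\Psi_1,\Psi_2\in\Sym_K^+(\fp)$ with $\sigma_k^{-2}\Psi_1\le\Phi\le\sigma_k^{-2}\Psi_2$ and invokes Lemma~\ref{lem4:lambda1-inequality} before carrying out the same comparisons, whereas you work directly with the expansion \eqref{eq2:C_Phi-diagonal}; your version is slightly more streamlined but the underlying computations coincide.
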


\begin{proof}
We abbreviate $\sigma_i=\sigma_i(\Phi)$ for all $i$. 
We set
\begin{align*}
\Psi_1&=\Big(\bigoplus_{i<k} \tfrac{\sigma_k^2}{\sigma_{i}^2} \, \Id_{\fp_{i}}\Big) 
	\bigoplus \;\Id_{\CC_k},
&
\Psi_2&=\Big(\bigoplus_{i>k} \tfrac{\sigma_{i}^2}{\sigma_k^2} \, \Id_{\fp_{i}}\Big) 
	\bigoplus \; \Id_{\HH_k}.
\end{align*}
It follows immediately that $\sigma_k^{-2} \, \Psi_1 \leq \Phi\leq \sigma_k^{-2} \,\Psi_2$.
By Lemma~\ref{lem4:lambda1-inequality}, we obtain that
\begin{equation*}
\sigma_k^2\, \lambda_1(G/K,g_{\Psi_2}) 
= \lambda_1(G/K,g_{\sigma_k^{-2} \Psi_2}) 
\leq \lambda_1(G/K,g_\Phi)\leq 
\lambda_1(G/K,g_{\sigma_k^{-2} \Psi_1}) =
\sigma_k^2\, \lambda_1(G/K,g_{\Psi_1}). 
\end{equation*}
It remains to show that 
\begin{align}\label{eq4:claim-prop-lambda_1k}
\lambda_1(G/K,\HH_{k},g_0|_{\HH_{k}})
\leq \lambda_1(G/K,g_{\Psi_2})
\quad\text{ and }\quad
\lambda_1(G/K,g_{\Psi_1})\leq \lambda_1(\Delta_{\Id_{\fp}}|_{C_{\FF_k}^\infty(G/K)}).
\end{align}

Let $\{X_{i,l}\}$ be an orthonormal basis of $\fp$ with respect to $\innerdots_0$ that respects the diagonal decomposition of $\Phi$ (see Remark~\ref{rem2:basis}), that is, $\fp_{i}=\Span_\R\{X_{i,l}:1\leq l\leq \dim \fp_i\}$ for all $i$. 
From \eqref{eq2:C_Phi-diagonal}, we have that 
\begin{align*}
C_{\Psi_2} = 
\sum_{i\leq k} \sum_{l=1}^{\dim\fp_i} X_{i,l}^2 +
\sum_{i>k} \tfrac{\sigma_{i}^2}{\sigma_k^2} \sum_{l=1}^{\dim\fp_i} X_{i,l}^2  .
\end{align*}
Let $(\pi,V_\pi) \in\widehat G_K$. 
Since $\pi(-X^2)\geq0$ for every $X\in\fg$, we conclude that 
\begin{align*}
\pi(-C_{(\HH_k,g_0|_{\HH_k})})|_{V_\pi^K} 
= \sum_{i\leq k} \sum_{l=1}^{\dim\fp_i} \pi(-X_{i,l}^2)|_{V_\pi^K}
\leq \pi(-C_{\Psi_2})|_{V_\pi^K}.
\end{align*}
Consequently, $\lambda_{\min}(\pi(-C_{(\HH_k,g_0|_{\HH_k})})|_{V_\pi^K}) \leq \lambda_{\min}(\pi(-C_{\Psi_2})|_{V_\pi^K})$, thus the first inequality in \eqref{eq4:claim-prop-lambda_1k} follows by \eqref{eq2:lambda1(G,g_A)} and \eqref{eq2:lambda_1(G,H,g_I|H)}.

We now establish the inequality at the right-hand side in \eqref{eq4:claim-prop-lambda_1k}. 
We assume that the dimension of $C^\infty_{\FF_k}(G/K)$ is greater than one, otherwise the assertion is trivial. 
From \eqref{eq4:spec_I|C_Pk}, it suffices to show that $\lambda_1(G/K,g_{\Psi_1})\leq \lambda^\pi$ for all $\pi\in\widehat G$ satisfying that $\dim (V_\pi^K)^{\FF_{k}}>0$. 
	
Let $\pi_0\in\widehat G$ satisfying $(V_\pi^K)^{\FF_{k}}\neq0$ and let $v_0\in (V_\pi^K)^{\FF_{k}}$ with $\langle v_0, v_0\rangle_{\pi_0}=1$, where $\innerdots_{\pi_0}$ denotes the Hermitian inner product on $V_\pi$. 
From \eqref{eq2:C_Phi-diagonal}, we have that 
\begin{align*}
C_{\Psi_1} 
= \sum_{i\geq k} \sum_{l=1}^{\dim\fp_i} X_{i,l}^2 +
\sum_{i<k} \tfrac{\sigma_k^2}{\sigma_{i}^2} \sum_{l=1}^{\dim\fp_i} X_{i,l}^2 .
\end{align*}
Note that $\pi_0(X_{i,l})v_0=0$ for all $i,l$ satisfying $i<k$. 
Hence
\begin{multline*}
\lambda_1(G/K,g_{\Psi_1})
	\leq \; \lambda_{\min}(\pi_0(-C_{\Psi_1})|_{V_{\pi_0}^K}) 
	= \min_{v\in V_{\pi_0}^K:\, \langle v,v\rangle_{\pi_0}=1} \langle \pi_0(-C_{\Psi_1})v,v \rangle_{\pi_0} \\
	\leq \langle \pi_0(-C_{\Psi_1})v_0,v_0\rangle_{\pi_0}
	=\sum_{i\geq k} \sum_{l=1}^{\dim\fp_i} \langle \pi_0(-X_{i,k}^2)v_0,v_0\rangle_{\pi_0}
\\
	=\sum_{i}\sum_{l=1}^{\dim\fp_i} \langle \pi_0(-X_{i,k}^2)v_0,v_0\rangle_{\pi_0}
	=\langle \pi_0(-C_{\Id_\fp})v_0,v_0\rangle_{\pi_0} = \lambda^{\pi_0},
\end{multline*}
and the proof is complete. 
\end{proof}

\section{Proof of the main results} \label{sec:upperbounds}

In this section, we will prove Theorems~\ref{thm1:multfreeisotropy} and \ref{thm1:spheres} by the diameter estimates from Section~\ref{sec:diam} and the eigenvalue estimates from Section~\ref{sec:eigenvalues}. 
Let the assumptions and notations be as in Remark~\ref{rem2:assumption+notation} and Subsection~\ref{subsec:G-invmetrics}. 
In particular, $G$ is a compact connected Lie group, $K$ is a closed subgroup of $G$ and, at the Lie algebra level, we have the orthogonal decomposition $\fg=\fk\oplus\fp$ with respect to a fixed $\Ad(G)$-invariant inner product $\innerdots_0$. 
Furthermore, the space $\mathcal M(G,K)$ of $G$-invariant metrics on $G/K$ is in correspondence with the set $\Sym_K^+(\fp)$ of $\Ad(K)$-invariant positive definite symmetric endomorphisms on $\fp$.

From the simple estimates \eqref{eq3:diam-simple-estimates} and \eqref{eq4:lambda1-simple-estimates}, we obtain 
\begin{equation}\label{eq5:simple-estimates}
\frac{\sigma_q(\Phi)^2}{\sigma_1(\Phi)^2} \, c_0 \leq
\lambda_1(G/K,g_{\Phi})\, \diam(G/K,g_{\Phi})^2 
\leq \frac{\sigma_1(\Phi)^2}{\sigma_q(\Phi)^2} \, c_0
\end{equation}
for every $\Phi\in \Sym_K^+(\fp)$, 
where $c_0=\lambda_1(G/K,g_{\Id_\fp}) \diam(G/K,g_{\Id_\fp})^2$. 
In general, these estimates are not useful since $\sigma_1(\Phi)/\sigma_q(\Phi)$ is not bounded by any positive real number for $\Phi\in \Sym_K^+(\fp)$. 
The only exception occurs when $G/K$ is isotropy irreducible (i.e.\ the case $r=1$ and $q_1=1$ in \eqref{eq2:isotropydecomposition}), since we have $q=1$ thus $\sigma_1(\Phi)=\sigma_q(\Phi)$. 
However, in this case, there is a unique $G$-invariant metric on $G/K$ up to positive scaling, thus the functional $g\mapsto \lambda_1(G/K,g) \diam(G/K,g)^2$ is indeed constant for $g\in\mathcal M(G,K)$.

\begin{proposition}\label{prop6:ordered-estimates}
Let $G$ be a compact connected Lie group and let $K$ be a closed subgroup of $G$ such that $G/K$ is connected. 
We fix a decomposition $\fp=\fp_1\oplus\dots\oplus \fp_q$ in $\Ad(K)$-invariant subspaces. 
Let $k$ be the smallest index satisfying that $\bigoplus_{i=1}^k \fp_i$ is bracket generating.
We set 
\begin{align*}
\HH&=\bigoplus_{i=1}^k \fp_i,&
\FF&=\bigoplus_{i=1}^{k-1} \fp_i, &
\CC&=\bigoplus_{i=k}^{q} \fp_i.
\end{align*}
We assume that there is a proper subalgebra $\fh$ of $\fg$ containing $\fk$ and $\FF$ such that the associated connected subgroup $H$ of $G$ is closed in $G$. 
Then, 
\begin{equation}\label{eq5:estimates}
\begin{aligned}
\lambda_1(G/K,g_\Phi)\diam(G/K,g_\Phi)^2  &\geq \lambda_1(G/K,\HH,g_0|_{\HH}) \diam(G/K,g_0|_{\CC}^*)^2>0,\\
\lambda_1(G/K,g_\Phi)\diam(G/K,g_\Phi)^2  &\leq 
\lambda_1(\Delta_{\Id_{\fp}}|_{C_{\FF}^\infty(G/K)}) \diam(G/K,\HH,g_0|_{\HH})^2< \infty,
\end{aligned}
\end{equation}
for all $\Phi\in\Sym_K^+(\fp)$ satisfying that $\Phi|_{\fp_i} =\sigma_i(\Phi)^{-2}\Id_{\fp_i}$ for all $i$, with $\sigma_1(\Phi)\geq\dots\geq \sigma_q(\Phi)>0$.
\end{proposition}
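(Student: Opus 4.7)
My plan is to feed the hypothesis into Propositions~\ref{prop3:diam-k} and \ref{prop4:lambda1-k} at the distinguished index $k$, regarding the given $\fp=\fp_1\oplus\dots\oplus\fp_q$ as the diagonal decomposition of $\Phi$. In the notation of \ref{not2:HH_P^kCC_P^k} this gives $\HH_k=\HH$, $\CC_k=\CC$, and $\FF_k=\FF$, and the two propositions together supply
\begin{align*}
\lambda_1(G/K,\HH,g_0|_{\HH})\,\sigma_k(\Phi)^2 &\leq \lambda_1(G/K,g_\Phi) \leq \lambda_1(\Delta_{\Id_{\fp}}|_{C_{\FF}^\infty(G/K)})\,\sigma_k(\Phi)^2,\\
\sigma_k(\Phi)^{-1}\,\diam(G/K,g_0|_{\CC}^*) &\leq \diam(G/K,g_\Phi) \leq \sigma_k(\Phi)^{-1}\,\diam(G/K,\HH,g_0|_{\HH}).
\end{align*}
Multiplying the first line by the square of the second, the $\sigma_k(\Phi)^{\pm 2}$ factors cancel exactly and \eqref{eq5:estimates} falls out, provided the four structural quantities appearing at the extremes are finite and strictly positive.

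The two quantities tied to the sub-Riemannian data $(\HH,g_0|_{\HH})$ are controlled by the bracket-generating property of $\HH$: Theorem~\ref{thm2:Chow} gives $\diam(G/K,\HH,g_0|_{\HH})<\infty$, and Theorem~\ref{thm2:Hormander-consequence} gives $\lambda_1(G/K,\HH,g_0|_{\HH})<\infty$. Strict positivity of the latter follows from \eqref{eq2:lambda_1(G,H,g_I|H)}: a vanishing minimum therein would produce a non-trivial $\pi\in\widehat G_K$ and a non-zero $v\in V_\pi^K$ with $\pi(X)v=0$ for all $X\in\HH$; combined with \eqref{eq2:pi(C_k)v=0}, which supplies $\pi(X)v=0$ for all $X\in\fk$, the bracket-generating hypothesis would then force $\pi(X)v=0$ for all $X\in\fg$, and the connectedness of $G$ together with Schur's lemma would force $\pi\simeq 1_G$, contradicting the choice of $\pi$.

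The two remaining bounds exploit the subalgebra $\fh$ provided by the hypothesis. Since $\CC$ is the $\innerdots_0$-orthogonal complement of $\FF$ inside $\fp$, the radical of the positive semi-definite form $g_0|_{\CC}^*$ equals $\FF$, so $\fk\oplus\op{rad}(g_0|_{\CC}^*)=\fk\oplus\FF\subset\fh\neq\fg$; Proposition~\ref{prop2:diam-pseudo-Hclosed} then yields $\diam(G/K,g_0|_{\CC}^*)>0$. For the last bound, applying the inequality \eqref{eq:lambda1(singular)<infty} with $\fq=\FF$ gives $\lambda_1(\Delta_{\Id_{\fp}}|_{C_{\FF}^\infty(G/K)})\leq \lambda_1(G/H,g_{\Id_{\fh^\perp}})<\infty$. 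I do not anticipate any substantive obstacle: once the index-$k$ estimates of Sections~\ref{sec:diam} and \ref{sec:eigenvalues} are combined with the four finiteness/positivity inputs just verified, the proof reduces to the bookkeeping above.
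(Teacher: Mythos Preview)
Your proof is correct and follows the same route as the paper: apply Propositions~\ref{prop3:diam-k} and \ref{prop4:lambda1-k} at the index $k$ so that the factors $\sigma_k(\Phi)^{\pm 2}$ cancel, and then verify the four positivity/finiteness conditions via Theorem~\ref{thm2:Chow}, Theorem~\ref{thm2:Hormander-consequence}, Proposition~\ref{prop2:diam-pseudo-Hclosed}, and \eqref{eq:lambda1(singular)<infty}. The only difference is that the paper simply cites Theorem~\ref{thm2:Hormander-consequence} for the strict positivity $\lambda_1(G/K,\HH,g_0|_{\HH})>0$ (implicitly relying on discreteness of the spectrum together with the kernel being one-dimensional), whereas you supply a direct representation-theoretic argument via \eqref{eq2:lambda_1(G,H,g_I|H)} and \eqref{eq2:pi(C_k)v=0}; this is a harmless elaboration, arguably more self-contained than the paper's citation.
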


\begin{proof}
Note that if $\Phi\in\Sym_K^+(\fp)$ satisfies that $\Phi|_{\fp_i} =\sigma_i(\Phi)^{-2}\Id_{\fp_i}$ for all $i$ and $\sigma_1(\Phi)\geq\dots\geq \sigma_q(\Phi)$, then $\fp=\fp_1\oplus\dots\oplus \fp_q$ is a diagonal decomposition of $\Phi$ according to Notation~\ref{not2:diagonaldecomposition}. 
Thus, the two inequalities at the left in each row of \eqref{eq5:estimates} follow immediately from Propositions~\ref{prop3:diam-k} and \ref{prop4:lambda1-k}. 
It remains to show that the bounds are positive real numbers. 

Since $\HH$ is bracket generating, 
Theorems~\ref{thm2:Chow} and \ref{thm2:Hormander-consequence} give $\diam(G/K,\HH,g_0|_{\HH})<\infty$ and $\lambda_1(G/K,\HH,g_0|_{\HH})>0$ respectively. 
By using the assumption of the existence of the subalgebra $\fh$, we obtain that $\diam(G/K,g_0|_{\CC}^*)>0$ by Proposition~\ref{prop2:diam-pseudo-Hclosed} and 
\begin{equation*}
	\lambda_1(\Delta_{\Id_{\fp}}|_{C_{\FF}^\infty(G/K)})
	\leq 
	\lambda_1(G/H,g_{\Id_{\fh^\perp}})<\infty
\end{equation*}
by \eqref{eq:lambda1(singular)<infty}, and the proof is complete. 
\end{proof}

\begin{theorem}\label{thm5:EGSpartial}
Let $G$ be a semisimple compact connected Lie group and let $K$ be a closed subgroup of $G$ such that $G/K$ is connected. 
We fix a decomposition $\fp=\fp_1\oplus\dots\oplus \fp_q$ in $\Ad(K)$-invariant subspaces. 
Then, there exist $C>0$ such that 
\begin{equation}\label{eq5:main-estimate}
\lambda_1(G/K,g_\Phi)\diam(G/K,g_\Phi)^2\leq C
\end{equation}
for all $\Phi\in \Sym_K^+(\fp)$ preserving 
$
\fp=\bigoplus_{i=1}^{r} \fp_i, 
$ 
that is, $\Phi(\fp_{i}) \subset \fp_{i}$ for all $i$. \end{theorem}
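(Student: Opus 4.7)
The plan is to reduce Theorem~\ref{thm5:EGSpartial} to Proposition~\ref{prop6:ordered-estimates} via a refinement of the given decomposition, and then to extract a uniform constant by a compactness argument on the resulting family of flags.

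Fix $\Phi \in \Sym_K^+(\fp)$ preserving $\fp = \fp_1 \oplus \cdots \oplus \fp_q$. Each restriction $\Phi_i := \Phi|_{\fp_i}$ lies in $\Sym_K^+(\fp_i)$, and its eigenspaces give an $\Ad(K)$-invariant decomposition $\fp_i = V_{i,1} \oplus \cdots \oplus V_{i,p_i}$ on which $\Phi$ acts by scalars. Combining these, the refinement $\fp = \bigoplus_{i,l} V_{i,l}$ is a diagonal decomposition of $\Phi$ in the sense of Notation~\ref{not2:diagonaldecomposition}. Reindex the refined pieces as $\fp'_1, \ldots, \fp'_Q$ in non-increasing order of the associated scalars, and let $k=k(\Phi)$ be the smallest index for which $\HH'_k := \fp'_1 \oplus \cdots \oplus \fp'_k$ is bracket-generating; such a $k$ exists because $\fk \oplus \fp = \fg$ forces $\fp$ itself to be bracket-generating. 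By minimality of $k$, the subspace $\FF' := \fp'_1 \oplus \cdots \oplus \fp'_{k-1}$ is not bracket-generating, so the Lie subalgebra $\fh_0 \subset \fg$ generated by $\fk \cup \FF'$ is a proper subalgebra of $\fg$.

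The semisimplicity assumption enters here: every proper subalgebra of a compact semisimple Lie algebra is contained in a maximal proper subalgebra, and by the Borel--de Siebenthal and Dynkin classifications every such maximal subalgebra is the Lie algebra of a closed connected subgroup of $G$. Selecting any maximal $\fh \supseteq \fh_0$ with associated closed $H \subset G$ verifies the hypothesis of Proposition~\ref{prop6:ordered-estimates} for the refined decomposition, which then yields
\begin{equation*}
\lambda_1(G/K,g_\Phi)\,\diam(G/K,g_\Phi)^2 \;\leq\; \lambda_1\!\left(\Delta_{\Id_\fp}\big|_{C_{\FF'}^\infty(G/K)}\right) \diam(G/K, \HH'_k, g_0|_{\HH'_k})^2.
\end{equation*}

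It remains to prove this right-hand side stays bounded as $\Phi$ varies. The pair $(\FF', \HH'_k)$ ranges over a family of $\Ad(K)$-invariant flags compatible with $\fp = \bigoplus_i \fp_i$, each flag being a direct sum of $\Ad(K)$-invariant subspaces of the $\fp_i$'s; modulo finitely many dimension profiles, this family is a closed subvariety of a product of real Grassmannians and hence compact. The eigenvalue factor is dominated via~\eqref{eq:lambda1(singular)<infty} by $\lambda_1(G/H, g_{\Id_{\fh^\perp}})$, which depends only on the conjugacy class of $H$ and hence takes only finitely many values. Taking $C$ to be the supremum of the product over the flag family will complete the proof. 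The main obstacle is the uniform control of the sub-diameter $\diam(G/K, \HH'_k, g_0|_{\HH'_k})$: while finite for every bracket-generating $\HH'_k$ by Theorem~\ref{thm2:Chow}, it can a priori blow up as $\HH'_k$ degenerates within its stratum toward a non-bracket-generating configuration. Overcoming this requires either an enumeration of the finitely many stratified types of minimally bracket-generating $\Ad(K)$-invariant subspaces of $\fp$ compatible with the fixed decomposition, or a semi-continuity/compactness argument bounding the sub-diameter on each stratum separately.
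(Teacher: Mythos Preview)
Your refinement-plus-compactness strategy is aimed at a stronger claim than what the paper actually proves. Although the theorem statement says ``$\Phi(\fp_i)\subset\fp_i$'', the paper's own proof (and the description in the introduction around \eqref{eq1:diagonal}) treats only those $\Phi$ acting by a \emph{scalar} on each $\fp_i$; indeed the last sentence of the proof asserts that the set of such $\Phi$ equals $\bigcup_{\xi\in\mathbb S_q} S_\xi$, which is false for the broader interpretation. Under the intended (scalar) reading, your eigenspace refinement is unnecessary: a diagonal decomposition of $\Phi$ is simply a reordering of the fixed pieces $\fp_1,\dots,\fp_q$, so the relevant flags $(\FF',\HH'_k)$ range over the \emph{finite} set of partial unions $\bigoplus_{i\le k}\fp_{\xi(i)}$ for $\xi\in\mathbb S_q$. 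Proposition~\ref{prop6:ordered-estimates} then yields a constant $C_\xi$ for each ordering, and one takes $C=\max_\xi C_\xi$. No Grassmannian compactness, no stratification, no sub-diameter semi-continuity is needed.

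Two further remarks. First, for the closed-subgroup hypothesis in Proposition~\ref{prop6:ordered-estimates}, the paper does not invoke Borel--de Siebenthal or Dynkin; it uses the simpler fact (cited as \cite{MaciasVirgos}) that a compact semisimple Lie group has no dense proper connected subgroup, so the closure $\overline{H_0}$ of the subgroup generated by $\exp(\fk\cup\FF)$ is a proper closed subgroup whose Lie algebra contains $\fk\cup\FF$. Your maximal-subalgebra argument also works, but is heavier than necessary. Second, the difficulty you isolate---that $\diam(G/K,\HH'_k,g_0|_{\HH'_k})$ could blow up as $\HH'_k$ degenerates to a non-bracket-generating subspace---is a genuine obstacle if one insists on the literal ``$\Phi(\fp_i)\subset\fp_i$'' statement, and your proof sketch does not resolve it. So either read the theorem as the paper does (scalar action, finitely many cases) and give the short argument above, or acknowledge that the broader statement remains open as written.
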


\begin{proof}
For $\xi$ in $\mathbb S_q$, the set of permutations of $\{1,\dots,q\}$, we set $\fp_{\xi,i}=\fp_{\xi(i)}$ for all $i$ and let $S_\xi$ denote the set of $\Phi\in\Sym_K^+(\fp)$ satisfying that $\Phi= \bigoplus_i \sigma_i^{-2}\Id_{\fp_{\xi,i}}$ with $\sigma_1\geq\dots\geq \sigma_q>0$.

Fix $\xi\in\mathbb S_q$. 
We next check the assumptions of Proposition~\ref{prop6:ordered-estimates} applied to the decomposition $\fp=\fp_{\xi,1}\oplus\dots\oplus\fp_{\xi,q}$.
Let $k$ be the smallest index satisfying that $\HH:=\bigoplus_{i=1}^k \fp_{\xi,i}$ is bracket generating. 
Set $\FF=\bigoplus_{i=1}^{k-1} \fp_{\xi,i}$.
Let $\fh_0$ be the subalgebra of $\fg$ generated by $\fk\cup \FF$, which is proper since $\FF$ is not bracket generating. 
Let $H_0$ be the connected subgroup of $G$ with Lie algebra $\fh_0$. 
Since a compact semisimple Lie group $\fg$ does not admit any dense proper subgroup (see \cite{MaciasVirgos}), we have that $\bar H_0\neq G$. 
We conclude that the Lie algebra $\fh$ of $\bar H_0$ satisfies the required assumption in Proposition~\ref{prop6:ordered-estimates}. 

Now, Proposition~\ref{prop6:ordered-estimates} implies that there is $C_\xi>0$ such that $\lambda_1(G/K,g_\Phi)\diam(G/K,g_\Phi)^2\leq C_\xi$ for all $\Phi\in S_\xi$. 
Hence, $C:=\max_{\xi\in\mathbb S_q} C_\xi$ satisfies \eqref{eq5:main-estimate} since the set of $\Phi\in\Sym_K^+(\fp)$ preserving the decomposition $\fp=\bigoplus_{i=1}^{r} \fp_{\xi,i}$ coincides with $\cup_{\xi\in\mathbb S_q}S_\xi$.  
\end{proof}

We are now in position to prove the main theorems. 

\begin{proof}[Proof of Theorem~\ref{thm1:multfreeisotropy}]
Since the isotropy representation is multiplicity free, we have that the decomposition $\fp=\fp_1\oplus \dots\oplus \fp_r$ in isotropic components is the unique (up to order) decomposition of $\fp$ in irreducible $K$-sub-modules, and furthermore, every $\Phi\in\Sym_K^+(\fp)$ preserves it (see Remark~\ref{ex2:multiplicityfree}). 
The assertion thus follows immediately from Theorem~\ref{thm5:EGSpartial}. 
\end{proof}

\begin{remark}\label{rem5:dense}
	The semisimplicity assumption on $G$ in Theorem~\ref{thm5:EGSpartial} cannot be omitted, but it can be replaced by (involved) weaker hypotheses such as in the statement of Proposition~\ref{prop6:ordered-estimates}. 
	It is not clear to the author whether the semisimplicity assumption on $G$ in Theorem~\ref{thm1:multfreeisotropy} can be omitted. 
\end{remark}

\begin{proof}[Proof of Theorem~\ref{thm1:spheres}]
According to \cite{Ziller82}, any homogeneous metric on a CROSS (compact rank one symmetric space) $X$ is isometric to some $G$-invariant metric on $G/K$ for some $(G,K)$ in the following table:
\begin{equation*}
\begin{array}{cllccc}
\;\;\text{row}\;\; &G&K& G/K&\text{cond.}&\text{isotropy rep.} 
\\ \hline
\rule{0pt}{12pt}
1&\SO(2n+1) & \SO(2n) & S^{2n} & n\geq1 & W_1 
\\
2&\SU(2) & \{e\} & S^3 & & 3W_0
\\
3&\SU(2n+1) & \SU(2n) & S^{4n+1} & n\geq2 & W_0\oplus W_1
\\
4&\Sp(n+1) & \Sp(n) & S^{4n+3} & n\geq 1 & 3W_0\oplus W_1
\\
5&\Spin(9) & \Spin(7) & S^{15} &  & W_1\oplus W_2
\\
6&\SU(2n+1) & \mathrm{S}(\Ut(2n)\times\Ut(1)) & P^{2n}(\C) &n\geq 1& W_1
\\
7&\Sp(n+1) & \Sp(n)\times\Ut(1) & P^{2n+1}(\C) &n\geq 1& W_1\oplus W_2
\\
8&\Sp(n+1) & \Sp(n)\times\Sp(1) & P^n(\mathbb H) &n\geq 1& W_1
\\
9&\textrm{F}_4 & \Spin(9) & P^2(\mathbb O) && W_1.
\end{array}
\end{equation*}
In each row of the table, $W_0$ denotes the trivial representation of $K$ and $W_1,W_2$ are some non-trivial and non-equivalent irreducible real representations of $K$. 

Since there are only finitely many realizations $X=G/K$ for each dimension $d$, it is sufficient to show the assertion for each individual family. 
The cases in rows 1, 3, and 5--9 follow from Theorem~\ref{thm1:multfreeisotropy} since their isotropy representations are multiplicity free.
The case in the second row was shown in \cite{EldredgeGordinaSaloff18}. 
It only remains the case in the fourth row. 

We set 
\begin{equation*}
\begin{aligned}
G &=\Sp(n+1) =\{A\in \GL(n+1,\mathbb H): A^*A=I\}, \\
K &= \left\{\begin{pmatrix} A\\ &1\end{pmatrix}\in G: A\in\Sp(n)\right\} \simeq \Sp(n),\\
\fp_1&=\left\{\begin{pmatrix} &v \\ -v^*\end{pmatrix}\in \fg: v\in \mathbb H^n \right\} ,
\end{aligned}
\end{equation*}
$X_1=\diag(0,\dots,0,\mathrm{i})$, $X_2=\diag(0,\dots,0,\mathrm{j})$, and $X_3=\diag(0,\dots,0,\mathrm{k})$. 
One has that the isotropy representation decomposes as $\fp=\fp_0\oplus \fp_1$, where the action on   $\fp_0:=\Span_\R\{X_1,X_2,X_3\}$ is trivial and the action on $\fp_1$ is equivalent to the standard representation. 

Ziller~\cite{Ziller82} showed that every $G$-invariant metric on $G/K$ is isometric to $g_\Phi$ with $\Phi\in\Sym_K^+(\fp)$ preserving the decomposition $\fp=\fa_1\oplus \fa_2\oplus \fa_3\oplus\fp_1$, where $\fa_i=\Span_\R\{X_i\}$, that is, $\Phi(\fp_1)=\fp_1$ and $\Phi(\fa_i)=\fa_i$ for all $i$. 
The proof follows by applying Theorem~\ref{thm5:EGSpartial} to this particular decomposition. 
\end{proof}

\begin{remark}
For $G/K=\Sp(n+1)/\Sp(n)\simeq S^{4n+3}$ ($n\geq1$), it has been recently determined in \cite{BLPhomospheres} an explicit expression for $\lambda_1(S^{4n+3},g)$ for every $G$-invariant metric $g$ on $S^{4n+3}$.
This might help to obtain explicit constants $C_1,C_2>0$ satisfying that $C_1\leq \lambda_1(S^{4n+3},g) \, \diam(S^{4n+3},g)^2\leq C_2$ for all $g\in\mathcal M(G,K)$, analogous as those obtained in \cite{Lauret-SpecSU(2)} for $S^3$. 
\end{remark}

\bibliographystyle{plain}

\begin{thebibliography}{CKL16}
	
\bibitem[ABB]{AgrachevBarilariBoscain-book}
	{\sc A. {Agrachev}, D. {Barilari}, U. {Boscain}}.
	{A comprehensive Introduction to dub-Riemannian geometry.}
	{\it Cambridge Stud. Adv. Math.} \textbf{181},
	Cambridge University Press, Cambridge 2019.

\bibitem[Ar06]{Arvanitoyeorgos06}
	{\sc A. {Arvanitoyeorgos}}.
	{\it Geometry of flag manifolds.}
	Int. J. Geom. Methods Mod. Phys. \textbf{3}:5--6 (2006), 957--974.
	DOI: \href{http://dx.doi.org/10.1142/S0219887806001399} {10.1142/S0219887806001399}.
	
\bibitem[BLP20]{BLPhomospheres}
	{\sc R. Bettiol, E.A. Lauret, P. Piccione}.
	{\it The first eigenvalue of a homogeneous CROSS.}
	\href{http://arxiv.org/abs/2001.08471} {arXiv:2001.08471} (2020).

\bibitem[BBI]{BuragoBuragoIvanov-book}
	{\sc D. {Burago} and Yu. {Burago} and S. {Ivanov}}.
	A course in metric geometry.
	{\it Grad. Stud. Math.} \textbf{33}.
	Amer. Math. Soc., Providence, 2001.
	
\bibitem[CKL16]{ChenKangLiang16}
	{\sc Z. {Chen}, Y. {Kang}, K. {Liang}}.
	{\it Invariant Einstein metrics on three-locally-symmetric spaces}.
	Comm. Anal. Geom. \textbf{24}:4 (2016), 769--792.
	DOI: \href{http://dx.doi.org/10.4310/CAG.2016.v24.n4.a4} {10.4310/CAG.2016.v24.n4.a4}.
	
\bibitem[DK08]{DickinsonKerr08}
	{\sc W. Dickinson, M.M. Kerr}.
	{\it The geometry of compact homogeneous spaces with two isotropy summands}.
	Ann. Global Anal. Geom. \textbf{34}:4 (2008), 329--350. 
	DOI: \href{http://dx.doi.org/10.1007/s10455-008-9109-9} {10.1007/s10455-008-9109-9}.
	
\bibitem[EGS18]{EldredgeGordinaSaloff18}
	{\sc N. Eldredge, M. Gordina, L. Saloff-Coste}.
	{\it Left-invariant geometries on $\mathrm{SU}(2)$ are uniformly doubling}.
	Geom. Funct. Anal. \textbf{28}:5 (2018), 1321--1367.
	DOI: \href{http://dx.doi.org/10.1007/s00039-018-0457-8} {10.1007/s00039-018-0457-8}.

\bibitem[Ho67]{Hormander67}
	{\sc L. H\"{o}rmander}.
	{\it Hypoelliptic second order differential equations}.
	Acta Math. \textbf{119} (1967), 147--171. 
	DOI: \href{http://dx.doi.org/10.1007/BF02392081} {10.1007/BF02392081}.

\bibitem[JL19]{JudgeLyons17}
	{\sc C. Judge, R. Lyons}.
	{\it Upper bounds for the spectral function on homogeneous spaces via volume growth}.
	Rev. Mat. Iberoam. \textbf{35}:6, (2019), 1835--1858.
	DOI: \href{http://dx.doi.org/10.4171/rmi/1103} {10.4171/rmi/1103}.
	
\bibitem[Ku]{Kupeli}
	{\sc D.N. Kupeli}.
	Singular Semi-Riemannian Geometry.
	\textit{Mathematics and Its Applications}, Springer Netherlands, 1996.
	DOI: \href{http://dx.doi.org/10.1007/978-94-015-8761-7} {10.1007/978-94-015-8761-7}.
	
\bibitem[La19]{Lauret-SpecSU(2)}
	{\sc E.A. Lauret}.
	{\it The smallest Laplace eigenvalue of homogeneous 3-spheres}.
	Bull. Lond. Math. Soc. \textbf{51}:1, (2019), 49--69.
	DOI: \href{http://dx.doi.org/10.1112/blms.12213} {10.1112/blms.12213}.

\bibitem[La20a]{Lauret-natred}
	{\sc E.A. Lauret}.
	{\it On the smallest Laplace eigenvalue for naturally reductive metrics on compact simple Lie groups.}
	Proc. Amer. Math. Soc. \textbf{148}:8, (2020), 3375--3380.
	DOI: \href{http://dx.doi.org/10.1090/proc/14969} {10.1090/proc/14969}.
	
\bibitem[La20b]{Lauret-EGSconj}
	{\sc E.A. Lauret}.
	{\it Diameter and Laplace eigenvalue estimates for left-invariant metrics on compact Lie groups.}
	\href{https://arxiv.org/abs/2004.00350} {arXiv:2004.00350} (2020).

\bibitem[LeD]{LeDonne-lecturenotes}
	{\sc E. Le Donne}.
	{Lecture notes on sub-Riemannian geometry}.
	Unpublished monograph available on the \href{https://sites.google.com/site/enricoledonne/}{author's web page}.
	Download in May 2020.  
	
\bibitem[Li80]{Li80}
	{\sc P. Li}.
	{\it Eigenvalue estimates on homogeneous manifolds.}
	Comment.\ Math.\ Helvetici \textbf{55} (1980), 347--363.
	DOI: \href{http://dx.doi.org/10.1007/BF02566692} {10.1007/BF02566692}.
		
\bibitem[Ma93]{MaciasVirgos}
	{\sc E. {Macias Virg\'{o}s}}.
	{\it Non-closed {L}ie subgroups of {L}ie groups.}
	Ann. Global Anal. Geom. \textbf{11}:1 (1993), 35--40. 
	DOI: \href{http://dx.doi.org/10.1007/BF00773362} {10.1007/BF00773362}.
	
\bibitem[Mo]{Montgomery-tour}
	{\sc R. Montgomery}.
	{A tour of subriemannian geometries, their geodesics and 		applications.}
	{\it Math. Surveys Monogr} \textbf{91}.
	Amer. Math. Soc., Providence, 2002.
	
\bibitem[MU80]{MutoUrakawa80}
	{\sc H. Mut\^o, H. Urakawa}.
	{\it On the least positive eigenvalue of {L}aplacian for compact homogeneous spaces.}
	Osaka J. Math. \textbf{17}:2 (1980), 471--484. 
	DOI: \href{http://dx.doi.org/10.18910/12474} {10.18910/12474}.
	
\bibitem[Ni16]{Nikonorov16}
	{\sc Yu.G. {Nikonorov}}.
	{\it Classification of generalized Wallach spaces}.
	Geom. Dedicata \textbf{181} (2016), 193--212. 
	DOI: \href{http://dx.doi.org/10.1007/s10711-015-0119-z} {10.1007/s10711-015-0119-z}.
	
\bibitem[Ta]{Takeuchi}
	{\sc M. Takeuchi}.
	Modern spherical functions. (Transl. from the Japanese by Toshinobu Nagura.)
	{\it Transl. Math. Monogr.} \textbf{135}.
	Amer. Math. Soc., Providence, 1994.
	
\bibitem[WZ91]{WangZiller91}
	{\sc M.Y.-K. Wang, W. Ziller}.
	{\it {On isotropy irreducible Riemannian manifolds}},
	Acta Math. \textbf{166}:3--4 (1991), 223--261. 
	DOI: \href{http://dx.doi.org/10.1007/BF02398887} {10.1007/BF02398887}.	
	
\bibitem[Zi82]{Ziller82}
	{\sc W. Ziller}.
	{\it Homogeneous {E}instein metrics on spheres and projective spaces}.
	Math. Ann. \textbf{259}:3 (1982), 351--358. 
	DOI: \href{http://dx.doi.org/10.1007/BF01456947} {10.1007/BF01456947}.
	
	
\end{thebibliography}

\end{document}